\theoremstyle{plain}
\newtheorem{theorem}{Theorem}[section]
\newtheorem{lemma}[theorem]{Lemma}
\newtheorem{proposition}[theorem]{Proposition}
\theoremstyle{definition}
\newtheorem*{h1}{(H1)}
\newtheorem*{h2}{(H2)}
\newtheorem*{notation}{Notation}
\newtheorem*{ack}{Acknowledgement}
\numberwithin{equation}{section}
\renewcommand\labelenumi{\textup{\alph{enumi})}}
\renewcommand\theenumi\labelenumi
\newcommand\Ee{\mathds{E}}
\newcommand\nat{\mathds{N}}
\newcommand\real{{\mathds{R}}}
\newcommand\rd{{\mathds{R}^d}}
\newcommand\I{\mathds{1}}
\newcommand\dup{\mathrm{d}}
\newcommand\dif{\mathrm{d}}
\newcommand\eup{\mathrm{e}}
\newcommand\iup{\mathop{\mathrm{i}}}
\newcommand\Lip{\mathrm{Lip}}
\newcommand\Dom{\mathrm{Dom}}
\newcommand\DDom{\widetilde{\mathrm{Dom}}}
\newcommand\HS{\mathrm{HS}}
\newcommand\law{\mathrm{law}}
\newcommand\lin{\mathrm{lin}}
\newcommand\loc{\mathrm{loc}}
\newcommand\op{\mathrm{op}}
\begin{document}
\title[]{Optimal Wasserstein-$1$ distance between SDEs driven by Brownian motion and stable processes}



\author[C.-S.~Deng]{Chang-Song Deng}
\address[C.-S.~Deng]{School of Mathematics and Statistics\\ Wuhan University\\ Wuhan 430072, China}
\email{dengcs@whu.edu.cn}

\author[R.L.~Schilling]{Ren\'e L.\ Schilling}
\address[R.L.~Schilling]{TU Dresden\\ Fakult\"{a}t Mathematik\\ Institut f\"{u}r Mathematische Stochastik\\ 01062 Dresden, Germany}
\email{rene.schilling@tu-dresden.de}

\author[L.~Xu]{Lihu Xu}
\address[L.~Xu]{Department of Mathematics, Faculty of Science and Technology, University of Macau, Macau S.A.R., China }
\email{lihuxu@um.edu.mo}

\begin{abstract}
    We are interested in the following two $\rd$-valued stochastic differential equations (SDEs):
    \begin{gather*}
        \dup X_t=b(X_t)\,\dup t + \sigma\,\dup L_t, \quad X_0=x,\\
        \dup Y_t=b(Y_t)\,\dup t + \sigma\,\dup B_t, \quad Y_0=y,
    \end{gather*}
    where $\sigma$ is an invertible $d\times d$ matrix, $L_t$ is a rotationally symmetric $\alpha$-stable L\'evy process, and $B_t$ is a $d$-dimensional standard Brownian motion (note that $B_t$ is a rotationally symmetric $\alpha$-stable L\'evy process with $\alpha=2$). We show that for any $\alpha_0 \in (1,2)$ the Wasserstein-$1$ distance $W_1$ satisfies for $\alpha \in [\alpha_0,2)$
    \begin{gather*}
    W_{1}\left(\law (X_{t}^x), \law (Y_{t}^y)\right)
    \leq C_1\eup^{-C_2t}|x-y|
    +\frac{C}{\alpha_0-1}(2-\alpha)d\log(1+d),
    \end{gather*}
    which implies, in particular,
    \begin{equation} \label{e:W1Rate}
        W_1(\mu_\alpha, \mu_2)
        \leq \frac{C}{\alpha_0-1}(2-\alpha)d\log(1+d),
    \end{equation}
    where $\mu_\alpha$ and $\mu_2$ are the ergodic measures of $X_t$ and $Y_t$ respectively.

   For the special case of a $d$-dimensional Ornstein--Uhlenbeck system, we show that $W_1(\mu_\alpha, \mu_2) \geq C_{d}  (2-\alpha)$ for all $\alpha\in(1,2)$; this indicates that the convergence rate with respect to $\alpha$ in \eqref{e:W1Rate} is optimal.  The term $d\log(1+d)$ appearing in the estimate \eqref{e:W1Rate} seems to be optimal for the dimension $d$. 
\end{abstract}

\maketitle

\tableofcontents\marginnote

\noindent

\section{Introduction}

We study the following two $\rd$-valued stochastic differential equations (SDEs):
\begin{gather}\label{stableSDE}
    \dup X_t = b(X_t)\,\dup t + \sigma\,\dup L_t, \quad X_0=x,
\\\label{BM-SDE}
    \dup Y_t = b(Y_t)\,\dup t + \sigma\,\dup B_t, \quad Y_0=y,
\end{gather}
where $\sigma$ is an invertible $d\times d$ matrix, $B_t$ is a $d$-dimensional standard Brownian motion, and $L_t$ is a rotationally symmetric $\alpha$-stable L\'evy process with characteristic function $\Ee\,\eup^{\iup \xi L_t}=\eup^{-t|\xi|^\alpha/2}$. Under some suitable conditions, we can easily show that both equations have solutions which are ergodic.

SDEs driven by $\alpha$-stable processes have been intensively studied in recent years. We refer the reader to \cite{CHZ20, LSX20, Zha13, LW20} for gradient estimates, to \cite{Wan16, WXX17,PZ11, DSX23, DXZ14,Xu13, PSXY22} for structural properties and ergodicity,
and to \cite{CDSX22, CZZ21, KS19, JMW96} for the existence and uniqueness of solutions and approximation schemes.
The aim of this paper is to study the difference between the two ergodic measures in Wasserstein distance.

From L\'evy's continuity theorem, see e.g.\ \cite{Dur19}, we know that an $\alpha$-stable distribution converges to the normal distribution as $\alpha \uparrow 2$. It is natural and important to ask whether this convergence carries over to SDEs driven by Brownian motion and stable processes. As an application, one can justify that heavy tailed financial time series with second moment could be modeled by an SDE driven by Brownian motion \cite{Gro21}.  There have been several results in this direction, see for instance \cite{Liu22-2, Liu22, Liu22-1} and the references therein, but all of these results establish only the convergence without giving a rate.
In the present paper we obtain a convergence rate which is optimal.

Throughout the paper, we make the following two assumptions:

\begin{h1}
There exist constants $\theta_0>0$ and $K\geq 0$ such that
\begin{gather*}
    \langle x-y, b(x)-b(y)\rangle\leq
    -\theta_0|x-y|^2+K
    \quad\text{for all\ \ } x, y\in\rd;
\end{gather*}
\end{h1}
\begin{h2}
There exist constants $\theta_1,\theta_2,\theta_3\geq 0$ such that for all $x,v_1,v_2,v_3 \in \rd$,
\begin{align}
    |\nabla_{v_1} b(x)|
    &\leq\theta_1|v_1|, \label{H1-1'}\\
    |\nabla_{v_2}\nabla_{v_1} b(x)|
    &\leq\theta_2|v_1||v_2|, \label{H1-2}\\
    |\nabla_{v_3}\nabla_{v_2}\nabla_{v_1} b(x)|
    &\leq \theta_3|v_1||v_2||v_3|. \label{H1-3}
\end{align}
Here the directional derivatives will be defined more precisely later.
\end{h2}

It is well-known that if \eqref{H1-1'} holds, then both \eqref{stableSDE} and \eqref{BM-SDE}
have unique non-explosive (strong) solutions. Whenever we want to emphasize the starting point $X_0=x$ for a given $x \in \rd$,
 we will write $X_t^x$ instead of $X_t$; we use this also for $Y_t^y$ for a given $y \in \rd$.

\begin{notation}
We denote by $C(\rd,\real)$, $C^k(\rd,\real)$ the sets of continuous and $k$-times continuously differentiable functions; the subscripts ``$b$'' and ``$c$'' indicate that the functions and all their derivatives up to order $k$ are bounded, resp., have compact support.

Denote by $\Lip$ the set of all Lipschitz functions from $\rd$ to $\real$. The set of Lipschitz functions with Lipschitz constant $1$ is denoted by
\begin{gather*}
    \Lip(1)= \left\{h : \rd \to \real       \:;\: |h(x)-h(y)|\leq |x-y| \text{\ for all\ } x,y \in \rd \right\}.
\end{gather*}     
The Wasserstein-$1$ distance between two probability measures $\mu$ and $\nu$ is defined as
\begin{align*}
    W_1(\mu,\nu)
    &= \sup_{h \in \Lip(1)} \left\{\int h(x) \,\mu (\dif x) - \int h(x) \,\nu (\dif x)\right\} \\ 
    &= \sup_{h \in \Lip(1),|h(\cdot)| \leq |\cdot|} \left\{\int h(x) \,\mu (\dif x) - \int h(x) \,\nu (\dif x)\right\}.
\end{align*}
For $h\in\Lip(1)$, $\|\nabla h\|_\infty$ is defined as $\|\nabla h\|_\infty=\sup_{x,y \in \rd,x\neq y} \frac{|h(x)-h(y)|}{|x-y|}$, which is the Lipschitz constant.

Throughout this paper, $C,C_1,C_2$ denote positive constants which may depend on $\theta_0,\theta_1, \theta_2,\theta_3,K,
\|\sigma\|_{\op}$, but they are always independent of $d$ and $\alpha$; their values may change,
without further notice, from line to line.
Denote by $|x|$ the Euclidean norm of $x\in\rd$.
     
\end{notation}

\subsection{Main result}
From the classical Lyapunov function criterion \cite{MeTw09} or Harris' Theorem \cite{PSXZ12, Hai21} we know that the solutions to
the SDEs \eqref{stableSDE} and \eqref{BM-SDE} are ergodic. Denote by $\mu_\alpha$ and $\mu_2$ the respective ergodic
measures of $X_t^x$ and $Y_t^y$.
The following theorem is the main result of our paper.
\begin{theorem}\label{main1}
Assume that both \textup{\textbf{(H1)}} and \textup{\textbf{(H2)}} hold true, and let $\alpha_0\in(1,2)$ be an arbitrary number. For any $\alpha \in [\alpha_0,2)$, $x,y\in\rd$ and $t>0$, we have
\begin{gather*}
    W_{1}\left(\law (X_{t}^x), \law (Y_{t}^y)\right)
    \leq C_1\eup^{-C_2t}|x-y|
    +\frac{C}{\alpha_0-1}(2-\alpha)d\log(1+d).
\end{gather*}
In particular,
\begin{gather}\label{was-est}
    W_1(\mu_\alpha,\mu_2)
    \leq \frac{C}{\alpha_0-1}(2-\alpha)d\log(1+d),
\end{gather}
where $\mu_\alpha$ and $\mu_2$ are the ergodic
measures of $X_t^x$ and $Y_t^y$ respectively.
\end{theorem}

It seems to be difficult to improve the factor $d\log(1+d)$ in the estimate \eqref{was-est}. This becomes clear from the proof of Lemma~\ref{genediff}, since the estimate of the term $\mathsf{J}_{11}$ used in this lemma is sharp; further details are given shortly before the statement of Lemma~\ref{genediff}. For the particular case of a $d$-dimensional Ornstein--Uhlenbeck system,
we show in Section \ref{s:OU} that, with some constant $C_d>0$ depending only on $d$,
$W_1(\mu_\alpha, \mu_2) \geq C_{d}  (2-\alpha)$ for all $\alpha\in(1,2)$, from which we see that the convergence rate with respect to $\alpha$ in Theorem~\ref{main1} \eqref{was-est} is optimal.

Our method relies on Duhamel's principle and a comparison of the generators of the solutions of the two SDEs; it may be seen as
a continuous version of the probability approximation framework established in \cite{CSX22+}. Another
key ingredient of our analysis is Bismut's formula from Malliavin calculus.

\subsection{Preliminaries}
In order to prove the main result, we use the fact that the solutions $(X_t^x)_{t \geq 0}$ and $(Y_t^y)_{t \geq 0}$ to the SDEs
are Markov processes. The operator semigroup induced by the Markov process $(X_t^x)_{t\geq 0}$ is given by
\begin{gather*}
    P_t f(x) = \Ee f(X^x_t),\quad f \in C_b(\rd,\real),\; t>0.
\end{gather*}
The infinitesimal generator $\mathscr{A}^P$ is a closed operator defined on the set of continuous functions vanishing at infinity, $C_\infty = C_\infty(\rd,\real)$,
\begin{gather*}
    \Dom (\mathscr{A}^P)
    := \left\{f\in C_\infty \:;\:      g(x) =       \lim_{t\to 0} \frac{P_t f(x)-f(x)}{t} \text{\ \ exists\ for all $x$ and\ }     g     \in C_\infty\right\}, \\
    \mathscr{A}^P f(x) :=\lim_{t\to 0} \frac{P_t f(x)-f(x)}{t}.
\end{gather*}
It is well-known that $C_c^2(\rd,\real)\subset \Dom(\mathscr{A}^P)$.     

Similarly, we can consider the semigroup $Q_t f(x)=\Ee f(Y^x_t)$ associated to $(Y_t^y)_{t \geq 0}$ and its infinitesimal generator $\mathscr{A}^Q$.

In applications, we often study a semigroup acting on functions which do not belong to $C^\infty_c(\rd,\real)$ or $C_\infty(\rd,\real)$, and so we need to extend the domains $\Dom(\mathscr A^P)$ and $\Dom(\mathscr A^Q)$ to a larger function class.

Let $(X_t^x)_{t\geq 0}$ be the solution to \eqref{stableSDE}. Because of the Lipschitz property of $b$,  we have
\begin{align*}
    \Ee |X^x_t|
    &\leq |x| + \int_0^t (|b(0)| + C\Ee |X^x_s|)\, \dif s + \Ee |\sigma L_t|  \\
    &\leq |x| + |b(0)|t + C t^{1/\alpha} + C\int_0^t \Ee |X^x_s| \, \dif s,
\end{align*}
which we may combine with Gronwall's inequality to get
\begin{gather*}
    \Ee[|X^x_t|] \leq C_t(1+|x|)
    \quad\text{and, similarly, }\quad
    \Ee[|Y^x_t|] \leq C_t(1+|x|).
\end{gather*}
Thus, it is natural to consider the semigroups $(P_t)_{t \geq 0}$ and $(Q_t)_{t \geq 0}$ on the class of functions with linear growth:
\begin{gather*}     
    C_{\lin}(\rd,\real)
    = \left\{ f \in C(\rd,\real) \:;\: \sup_{x \in \rd}\frac{|f(x)|}{1+|x|}<\infty\right\}
\end{gather*}
and we define an extension of $\mathscr{A}^P$  as
\begin{gather*}
    \DDom (\mathscr{A}^P)
    := \left\{f\in C_{\lin}  \:;\:      g(x)=      \lim_{t\to 0} \frac{P_t f(x)-f(x)}{t} \text{\ \ exists \ for all $x$ and\ }     g     \in C_{\lin} \right\} \\
    \mathscr{A}^P f(x) :=\lim_{t\to 0} \frac{P_t f(x)-f(x)}{t}.
\end{gather*}
The argument in \cite[(1.7)]{Zha13} (this is also used in the proof of \eqref{grad-2} and \eqref{grad-3}) still holds for $f \in C_{\lin}(\rd,\real)$ and shows, in this case, that $|\nabla^k P_t f(x)| \le C(1+|x|) t^{-k/\alpha}$, $k=1,2$. Since $C^2_\lin(\rd,\real)$, the space of all twice differentiable functions from $\rd$ to $\real$ which grow, together with their deriviatives, at most linearly is contained in $\DDom(\mathscr{A}^P)$, we conclude that $P_t f \in \DDom(\mathscr{A}^P)$.
In a similar way we can extend $\mathscr{A}^Q$ onto $\DDom(\mathscr{A}^Q)$. Since we can approximate functions in $C^2_\lin(\rd,\real)$ locally uniformly with a sequence from $C_c^2(\rd,\real)$, it is clear that the Kolmogorov equations remain valid in a pointwise sense:

\noindent
The Kolmogorov backward equations: For all $f \in C_{\lin}(\rd,\real)$ and $t>0$,
\begin{gather*}
    \frac{\dif}{\dif t}\, P_t f
    = \mathscr{A}^P P_t f, \quad
    \frac{\dif}{\dif t}\, Q_t f=
    \mathscr{A}^Q Q_t f.
\intertext{The Kolmogorov forward equations: For all $t>0$,}
    \frac{\dif}{\dif t}\, P_t f=P_t \mathscr{A}^P  f,  \quad f \in \DDom (\mathscr{A}^P),\\
    \frac{\dif}{\dif t}\, Q_t f=Q_t \mathscr{A}^Q  f, \quad f \in \DDom (\mathscr{A}^Q).
\end{gather*}

For $f \in  C^3(\rd,\real)$ and $v_1, v_2,v_3,x \in \rd$, the directional derivatives  $\nabla_{v_1} f(x)$, $\nabla_{v_2}\nabla_{v_1} f(x)$ and $\nabla_{v_3}\nabla_{v_2}\nabla_{v_1} f(x)$ are  defined by
\begin{align*}
    \nabla_{v_1} f(x)&:=\lim_{\epsilon\to 0}
    \frac{f(x+\epsilon v_1)-f(x)}{\epsilon},  \\
    \nabla_{v_2} \nabla_{v_1} f(x)
    &:= \lim_{\epsilon\to 0}
    \frac{\nabla_{v_1}f(x+\epsilon v_2)-
    \nabla_{v_1}f(x)}{\epsilon},
\intertext{and}
    \nabla_{v_3}\nabla_{v_2} \nabla_{v_1} f(x)
    &:= \lim_{\epsilon\to 0}
    \frac{\nabla_{v_2}\nabla_{v_1}f(x+\epsilon v_3)-
    \nabla_{v_2}\nabla_{v_1}f(x)}{\epsilon}.
\end{align*}
For $f \in  C^2(\rd,\real)$, denote its gradient and Hessian
at $x\in\real^d$ as $\nabla f(x)$ and $\nabla^2f(x)$, respectively. For $v_1, v_2,x \in \rd$, it holds that
\begin{gather*}
	\nabla_{v_1}f(x)=\langle \nabla f(x),v_1\rangle,\quad
	\nabla_{v_2} \nabla_{v_1} f(x)=\langle \nabla^2f(x),v_1v_2^\top\rangle_\HS,
\end{gather*}
where     the superscript ``$\top$'' means ``transposition'',       and $\langle A,B\rangle_\HS:=\sum_{i,j=1}^dA_{ij}B_{ij}$ is the
Hilbert--Schmidt inner product of two matrices $A,B\in\real^{d\times d}$.
For any $x \in \real^d$, the operator norms of $\nabla^{2} f(x) \in\real^{d\times d}$
and $\nabla^{3} f(x) \in\real^{d\times d\times d}$ are given by
\begin{align*}
\|\nabla^{2} f(x)\|_{\op}
    &:=
    \sup\left\{|\nabla_{v_{2}} \nabla_{v_{1}} f(x)|\,;\,
    v_1,v_2\in\rd, |v_{1}|=|v_{2}|=1
    \right\}\\
    &=
    \sup\left\{|\langle \nabla^2f(x),v_1v_2^\top\rangle_\HS|\,;\,
    v_1,v_2\in\rd, |v_{1}|=|v_{2}|=1
    \right\},
\end{align*}
and
\begin{gather*}
    \|\nabla^3 f(x)\|_{\op}
    :=
    \sup\left\{|\nabla_{v_{3}}\nabla_{v_{2}} \nabla_{v_{1}} f(x)|\,;\,
    v_1,v_2,v_3\in\rd, |v_{1}|=|v_{2}|=|v_{3}|=1
    \right\},
\end{gather*}
respectively. For $f \in C_b^3(\rd,\real)$, we will use the supremum norms
\begin{gather*}
    \|\nabla f\|_{\infty}:=
    \sup_{x\in\rd}|\nabla f(x)|,
\qquad
    \|\nabla^if\|_{\mathrm{op},\infty}:=
    \sup_{x\in\rd}\|\nabla^if(x)\|_{\op}\quad (i=2,3).
\end{gather*}
{     
For notational simplicity, we also write $\nabla f(x)v_1=\nabla_{v_1}f(x)$,
$\nabla^2f(x)v_1v_2=\nabla_{v_2}\nabla_{v_1}f(x)$ and
$\nabla^3f(x)v_1v_2v_3=\nabla_{v_3}\nabla_{v_2}\nabla_{v_1}f(x)$ for $v_1,v_2,v_3\in\rd$.
}

{     
The directional derivatives are similarly defined for (sufficiently smooth) vector-valued
functions $f=(f_{1},f_{2},\cdots,f_{d})^{\top}: \rd\to \rd$:
for $v_1, v_2,v_3,x \in \rd$, then 
\begin{align*}
   \nabla f(x)v_1&=\nabla_{v_1}f(x)=(\nabla_{v_1}f_{1}(x),\dots,\nabla_{v_1}f_{d}(x))^{\top},\\
   \nabla^2f(x)v_1v_2&=\nabla_{v_{2}}\nabla_{v_{1}}f(x)= (\nabla_{v_{2}}\nabla_{v_{1}}f_{1}(x),\dots,\nabla_{v_{2}} \nabla_{v_{1}}f_{d}(x))^{\top},\\
   \nabla^3f(x)v_1v_2v_3&=\nabla_{v_{3}}\nabla_{v_{2}}\nabla_{v_{1}}f(x)= (\nabla_{v_{3}}\nabla_{v_{2}}\nabla_{v_{1}}f_{1}(x),
\dots,\nabla_{v_{3}}\nabla_{v_{2}} \nabla_{v_{1}}f_{d}(x))^{\top}.
\end{align*}
}

For a matrix $A\in\real^{d\times d}$, its Hilbert--Schmidt norm is $\|A\|_\HS=\sqrt{\langle A,A\rangle_\HS}$ and
its operator norm is $\|A\|_\op=\sup_{v\in\rd,|v|=1}|Av|$. We have the following relations:
\begin{gather*}
    \|A\|_\op=\sup_{v_1,v_2\in\rd,|v_1|=|v_2|=1} \left| \langle A,v_1v_2^\top\rangle_\HS\right|,\quad
    \|A\|_\op\leq\|A\|_\HS\leq\sqrt{d}\|A\|_\op.
\end{gather*}

For further use, set
\begin{equation} \label{e:AWDef}
    A(d,\alpha):=\frac{\alpha\Gamma(\frac{d+\alpha}{2})}
    {2^{2-\alpha}\pi^{d/2}\Gamma(1-\frac\alpha2)},\quad
    \omega_{d-1}
    :=\frac{2\pi^{d/2}}{\Gamma\left(\frac{d}{2}\right)}.
\end{equation}

\section{Proof of Theorem \ref{main1}}

For the proof of Theorem \ref{main1} we need some preparations. First, the following gradient estimates are crucial,
and     their       proofs are postponed to Section \ref{gradproof}.

\begin{lemma}\label{grad}
Assume \textup{\textbf{(H2)}}. Then for all $h \in \Lip(1)$ and $t\in(0,1]$,
\begin{align}\label{grad-1}
    \|\nabla Q_th\|_{\infty} &\leq C,
\\ \label{grad-2}
    \|\nabla^2 Q_th\|_{\op,\infty} &\leq Ct^{-1/2},
\\ \label{grad-3}
    \|\nabla^3 Q_th\|_{\op,\infty} &\leq C t^{-1}.
\end{align}
\end{lemma}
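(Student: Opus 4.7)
The plan is to combine the subordination representation \eqref{e:subord} with Bismut's integration-by-parts formula \eqref{bismut} applied to the time-changed SDE \eqref{ellepSDE}. After conditioning on a subordinator path $\ell$, I would approximate by $\ell^\epsilon$ (so that $\gamma^\epsilon$ is absolutely continuous and Malliavin calculus applies to $Z^{x;\ell^\epsilon}$), prove bounds on the Wiener space which depend only on $\ell_t^\epsilon-\ell_0^\epsilon$, pass to the limit $\epsilon\downarrow 0$ using \eqref{apprxi}, and finally integrate against $\mu_\mathds{S}$, using the scaling identity $S_t\stackrel{d}{=}t^{2/\alpha}S_1$ to extract the claimed rates $t^{-1/\alpha}$ and $t^{-2/\alpha}$.

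Estimate \eqref{grad-1} should not require Bismut: after approximating $h\in\Lip(1)$ by smooth functions with the same Lipschitz constant, I would interchange differentiation and expectation to get $\nabla_{v_1}P_th(x)=\Ee[\nabla h(X_t^x)\,\nabla_{v_1}X_t^x]$, which is bounded by $C|v_1|$ thanks to $\|\nabla h\|_\infty\le 1$ and Lemma~\ref{xgrad}. For \eqref{grad-2}, I would work on Wiener space with $\ell^\epsilon$ fixed and apply Bismut once in the direction $U_{\ell_t^\epsilon,1}$; the identity \eqref{equality} turns $\nabla_{v_1}\Ee^\mathds{W}h(Z^{x;\ell^\epsilon}_{\ell_t^\epsilon})$ into $\Ee^\mathds{W}[h(Z^{x;\ell^\epsilon}_{\ell_t^\epsilon})\int_{\ell_0^\epsilon}^{\ell_t^\epsilon}u_{\ell_t^\epsilon,1}(s)\,\dif W_s]$. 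I would then take a classical directional derivative in $v_2$ under the expectation; the product rule generates one term containing $\nabla h(Z)\nabla_{v_2}Z$ and one containing $\nabla_{v_2}\nabla_{v_1}Z$. Cauchy--Schwarz together with Itô's isometry, the bounds from Lemmas~\ref{zgrad} and \eqref{dgradesti-1}, and the normalization $h(0)=0$ (which, by adding a constant, we may assume without loss of generality) would give a bound of the form $C|v_1||v_2|(1+|x|)/\sqrt{\ell_t^\epsilon-\ell_0^\epsilon}$. Letting $\epsilon\downarrow 0$ and averaging over $\mathds{S}$ leaves $\Ee[S_t^{-1/2}]=t^{-1/\alpha}\Ee[S_1^{-1/2}]$, and since $S_1$ is $\alpha/2$-stable with $\alpha\in[\alpha_0,2)$ staying away from $0$ and $2$ (with the limiting case $\alpha\uparrow 2$ giving $S_1\to 1$ in distribution), this expectation is uniformly bounded by a constant depending only on $\alpha_0$.

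Estimate \eqref{grad-3} requires a second Bismut step: I would re-apply the integration-by-parts formula to the representation already obtained for $\nabla_{v_1}P_th(x)$, using the direction $U_{\ell_t^\epsilon,2}$ to push the remaining $\nabla h$ off into another stochastic integral. After taking a final classical derivative in $v_3$, the expression expands into a finite sum of terms of the shape $\Ee[h(Z)\mathcal{I}_1\mathcal{I}_2]$ or $\Ee[\nabla h(Z)\cdot(\cdots)]$, where each $\mathcal{I}_i$ is a stochastic integral of a $u_{\ell_t^\epsilon,j}$-like process and the coefficients involve the iterated Malliavin derivatives \eqref{dgradesti-2}--\eqref{dgradesti-3}. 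Using the Burkholder--Davis--Gundy inequality to bound products of two such stochastic integrals, combined with the $L^2$-norms of the $u_{\ell_t^\epsilon,i}$ (each of order $(\ell_t^\epsilon-\ell_0^\epsilon)^{-1/2}$), should produce the bound $C|v_1||v_2||v_3|(1+|x|)/(\ell_t^\epsilon-\ell_0^\epsilon)$, which upon integration against $\mu_\mathds{S}$ becomes $\Ee[S_t^{-1}]=t^{-2/\alpha}\Ee[S_1^{-1}]\le C_{\alpha_0}t^{-2/\alpha}$.

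The main obstacle is the third estimate. After two Bismut steps the bookkeeping becomes nontrivial: one must correctly identify which Malliavin derivatives appear (which is exactly why \eqref{dgradesti-1}--\eqref{dgradesti-3} were proved), verify that no higher-order singularity in $\ell_t^\epsilon-\ell_0^\epsilon$ appears beyond the expected $(\ell_t^\epsilon-\ell_0^\epsilon)^{-1}$, and confirm that $\Ee[S_1^{-1}]$ remains bounded as $\alpha\uparrow 2$. The second moment on $\mathcal{I}_1\mathcal{I}_2$ via BDG and Cauchy--Schwarz is the delicate point, and it is here that the restriction to $\alpha\ge\alpha_0>1$ is used through the stability properties of the $\alpha/2$-subordinator.
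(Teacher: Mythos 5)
Your overall architecture coincides with the paper's: mollify $h$, freeze the subordinator path, pass to the time-changed process $Z^{x;\ell^\epsilon}_t=X^{x;\ell^\epsilon}_{\gamma^\epsilon_t}$, apply Bismut's formula \eqref{bismut} with the directions $U_{t,i}$, let $\epsilon\downarrow 0$, and integrate out the subordinator using the negative moments of $S_t$ (your scaling identity $S_t\stackrel{d}{=}t^{2/\alpha}S_1$ is equivalent to the explicit formula \eqref{momentesti} the paper quotes). The treatment of \eqref{grad-1} is identical.

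There is, however, a genuine gap in your treatment of \eqref{grad-2} and \eqref{grad-3}: your bounds carry a factor $(1+|x|)$, whereas the lemma asserts bounds on $\|\nabla^2P_th\|_{\op,\infty}$ and $\|\nabla^3P_th\|_{\op,\infty}$, i.e.\ \emph{uniform in $x$}, and it is precisely these uniform bounds that Lemma~\ref{genediff} consumes (there $\|\nabla^2f\|_{\op,\infty}$ and $\|\nabla^3f\|_{\op,\infty}$ enter the estimates of $\mathsf{J}_{11}$ and $\mathsf{J}_{12}$, which must hold for all $x$). The factor $(1+|x|)$ is created by your order of operations: you first apply the Bismut--Elworthy--Li identity to write $\nabla_{v_1}\Ee h(Z_t)=\Ee\bigl[h(Z_t)\int u_{t,1}\,\dif W\bigr]$ and then differentiate classically, so the unbounded quantity $h(Z_t)$ (only Lipschitz, hence $|h(Z_t)|\le|Z_t|$ after your normalization) survives inside the expectation; normalizing $h(0)=0$ does not remove the growth because $\Ee|Z_t^x|$ itself grows linearly in $|x|$. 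The paper avoids this by expanding the classical derivatives \emph{first} (as in \eqref{2ndestidelta} and the five-term decomposition $\mathsf I_1,\dots,\mathsf I_5$), keeping every term that already involves only $\nabla h_\delta$, and using Bismut solely to lower $\nabla^2h_\delta$ and $\nabla^3h_\delta$ down to $\nabla h_\delta$ — never down to $h_\delta$ — so that every surviving term is controlled by $\|\nabla h_\delta\|_\infty\le 1$ together with Lemmas~\ref{xgrad}, \ref{zgrad} and \eqref{dgradesti-1}--\eqref{dgradesti-3}. Your route can be repaired (the stochastic integrals are mean-zero, so one may replace $h(Z_t)$ by $h(Z_t)-\Ee h(Z_t)$ and bound the variance of $Z_t$ by $C(t-\ell_0^\epsilon)$ via Gronwall), but as written the proposal does not yield the stated $\op,\infty$ estimates. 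A minor additional point: the restriction $\alpha\ge\alpha_0>1$ is not needed for the boundedness of $\Ee[S_1^{-1/2}]$ or $\Ee[S_1^{-1}]$ (these are uniformly bounded for $\alpha$ bounded away from $0$); in this lemma $\alpha_0$ only enters through such harmless constants, while the genuine need for $\alpha>1$ arises elsewhere (integrability of $|z|$ against the L\'evy measure in the term $\mathsf J_2$ of Lemma~\ref{genediff}).
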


\begin{lemma}\label{crate}
    Let $A(d,\alpha)$ and $\omega_{d-1}$ be as in \eqref{e:AWDef}.  Then for all $\alpha\in(0,2)$,
    \begin{gather*}
        \frac{A(d,\alpha)\omega_{d-1}}{d(2-\alpha)}\leq C \quad \text{and} \quad
        \left| \frac{A(d,\alpha)\omega_{d-1}}{d(2-\alpha)} - 1\right|
        \leq C(2-\alpha)\log(1+d).
    \end{gather*}
\end{lemma}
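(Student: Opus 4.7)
The first step is a direct algebraic reduction of the ratio using the classical identities $\Gamma(z+1)=z\Gamma(z)$ (applied to $\Gamma(1-\alpha/2)=\frac{2}{2-\alpha}\Gamma(2-\alpha/2)$ and to $d\,\Gamma(d/2)=2\Gamma(d/2+1)$). Plugging in the explicit form of $\omega_{d-1}$ cancels the $\pi^{d/2}$ and, happily, one factor of $(2-\alpha)$ against the denominator $d(2-\alpha)$. After setting $\beta := 2-\alpha \in (0,2)$, so that $\frac{d+\alpha}{2}=\frac{d}{2}+1-\frac{\beta}{2}$ and $2-\frac{\alpha}{2}=1+\frac{\beta}{2}$, the whole thing collapses into the factorization
\begin{equation*}
    \frac{A(d,\alpha)\,\omega_{d-1}}{d(2-\alpha)}
    \;=\; \underbrace{\frac{1-\beta/2}{2^{\beta}\,\Gamma(1+\beta/2)}}_{=:\,P(\beta)}\;\cdot\;\underbrace{\frac{\Gamma(d/2+1-\beta/2)}{\Gamma(d/2+1)}}_{=:\,Q(\beta,d)},
\end{equation*}
and both factors equal $1$ at $\beta=0$. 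So the whole claim becomes $|P(\beta)Q(\beta,d)-1|\le C\beta\log(1+d)$, handled via the triangle inequality $|PQ-1|\le P\,|Q-1|+|P-1|$.

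The factor $P(\beta)$ is independent of $d$ and smooth in $\beta$ on a neighbourhood of $[0,2)$, and $P(0)=1$. A one-line mean-value estimate gives $|P(\beta)-1|\le C\beta$, together with the uniform upper bound $P(\beta)\le C$ on $[0,2)$ (the denominator $2^{\beta}\Gamma(1+\beta/2)$ is bounded away from zero there). Since $\log(1+d)\ge \log 2$, this already yields $|P-1|\le C\beta\log(1+d)$, which will be absorbed without loss at the end.

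The dimension-dependence sits entirely in $Q(\beta,d)$. Writing $\log Q(\beta,d)=\log\Gamma(d/2+1-\beta/2)-\log\Gamma(d/2+1)$ and applying the mean value theorem to $\log\Gamma$ gives $\log Q(\beta,d)=-\tfrac{\beta}{2}\psi(\eta)$ for some $\eta\in[d/2+1-\beta/2,\,d/2+1]\subset[1/2,\infty)$. The digamma function satisfies $\psi(z)\le \log z$ for $z\ge 1$ and is bounded on $[1/2,2]$, so on the interval containing $\eta$ one has the uniform bound $|\psi(\eta)|\le C\log(1+d)$, valid for all $d\ge 1$. Consequently $|\log Q(\beta,d)|\le C\beta\log(1+d)$.

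The main (minor) obstacle is that one cannot automatically pass from $|\log Q|\le \varepsilon$ to $|Q-1|\le C\varepsilon$ when $\varepsilon$ is not small. I handle this with a standard two-regime argument: if $C\beta\log(1+d)\le 1$, then $|Q-1|=|e^{\log Q}-1|\le 2|\log Q|\le 2C\beta\log(1+d)$; otherwise the trivial bound $|Q-1|\le Q+1$ suffices, provided one knows $Q$ is uniformly bounded, which is straightforward from the monotonicity of $\Gamma$ on $[2,\infty)$ (handling $d\ge 2$) and a direct inspection in the finitely many cases $d=1$ and $d=2$ with $\beta\in(0,2)$. Combining this with $|P-1|\le C\beta$ and the uniform bound on $P$ produces $|P(\beta)Q(\beta,d)-1|\le C\beta\log(1+d)$, which is exactly the desired inequality.
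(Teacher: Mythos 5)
Your proof is correct. Both your argument and the paper's reduce the claim to a mean value theorem at $\alpha=2$ whose Lipschitz constant is controlled by the digamma asymptotics $\psi(z)=O(\log z)$ near $z=d/2$ — that is where $\log(1+d)$ enters in either case — but the decompositions are genuinely different. The paper first simplifies the ratio to $\frac{\alpha\Gamma(\frac{d+\alpha}{2})}{d\,2^{2-\alpha}\Gamma(2-\frac{\alpha}{2})\Gamma(\frac d2)}$ (the same reduction you perform), then works \emph{additively}: it sets $\rho(x)=x\Gamma(\frac{d+x}{2})-d\,2^{2-x}\Gamma(2-\frac x2)\Gamma(\frac d2)$, observes $\rho(2)=0$, bounds $|\rho'(x)|\leq Cd\,\Gamma(\frac d2)\log(1+d)$ on $[0,2]$ via $\psi$, and divides by the (bounded-below) denominator. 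This is a one-shot estimate with no exponentiation. You instead factor \emph{multiplicatively} into $P(\beta)Q(\beta,d)$, isolating all $d$-dependence in the Gamma quotient $Q$, apply the MVT to $\log\Gamma$, and then need the extra two-regime step (plus the uniform boundedness of $P$ and $Q$) to pass from $|\log Q|\lesssim\beta\log(1+d)$ to $|Q-1|\lesssim\beta\log(1+d)$. Your route makes the origin of the $\log(1+d)$ factor and the uniformity in $d$ more transparent; the paper's additive version is shorter because it avoids the exponentiation and the case analysis. All the individual steps you invoke (smoothness and boundedness of $P$, $\psi(z)\leq\log z$ for $z\geq 1$ and boundedness of $\psi$ on $[1/2,2]$, $|e^x-1|\leq 2|x|$ for $|x|\leq 1$, boundedness of $Q$) check out.
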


\begin{proof}
Note that
\begin{gather*}
    \frac{A(d,\alpha)\omega_{d-1}}{d(2-\alpha)}
    = \frac{\alpha\Gamma\left(\frac{d+\alpha}{2}\right)}{d(2-\alpha)2^{1-\alpha}\Gamma\left(1-\frac\alpha2\right) \Gamma\left(\frac{d}{2}\right)}
    = \frac{\alpha\Gamma\left(\frac{d+\alpha}{2}\right)} {d2^{2-\alpha}\Gamma\left(2-\frac\alpha2\right) \Gamma\left(\frac{d}{2}\right)}.
\end{gather*}
Since for all $\alpha\in(0,2)$
\begin{gather*}
    \Gamma\left(\frac{d+\alpha}{2}\right)\leq C\Gamma\left(\frac{d+2}{2}\right)=C\,\frac{d}{2}\,\Gamma\left(\frac{d}{2}\right),
\intertext{we get}
    \frac{A(d,\alpha)\omega_{d-1}}{d(2-\alpha)}
    \leq\frac{C\alpha}{2^{3-\alpha}\Gamma\left(2-\frac\alpha2\right)}
    \leq C,
\end{gather*}
which     is just       the first estimate. To prove the second     estimate,       set
\begin{gather*}
    \rho(x)
    :=x\Gamma\left( \frac{d+x}{2} \right) - d2^{2-x}\Gamma\left(2-\frac x2\right) \Gamma\left(\frac{d}{2}\right),
    \quad 0\leq x\leq 2.
\end{gather*}
It is easy to see that $\rho(2)=0$ and
\begin{align*}
    \rho'(x)
    &= \Gamma\left(\frac{d+x}{2}\right) + \frac{x}{2}\,\Gamma'\left(\frac{d+x}{2}\right)\\
    &\quad \mbox{} + d\cdot\Gamma\left(\frac{d}{2}\right)
        \left\{ \log 2 \cdot 2^{2-x} \Gamma\left(2-\frac{x}{2}\right) + 2^{1-x}\Gamma'\left(2-\frac{x}{2}\right)\right\}\\
    &= \Gamma\left(\frac{d+x}{2}\right) + \frac{x}{2}\,\psi\left(\frac{d+x}{2}\right) \Gamma\left(\frac{d+x}{2}\right)\\
    &\quad \mbox{} + d\cdot\Gamma\left(\frac{d}{2}\right)
        \left\{\log 2\cdot 2^{2-x} \Gamma\left(2-\frac{x}{2}\right) + 2^{1-x}\psi\left(2-\frac{x}{2}\right)
        \Gamma\left(2-\frac{x}{2}\right)\right\},
\end{align*}
where $\psi(x)=\Gamma'(x)/\Gamma(x)$ is the Digamma function. Since $\lim_{z\to\infty}\psi(z)/\log z=1$, cf.\ \cite[6.3.18, p.~259]{abra-steg}, we have for all $x\in[0,2]$,
\begin{gather*}
        |\rho'(x)|\leq Cd\,\Gamma\left(\frac{d}{2}\right)\log(1+d).
\end{gather*}
From this we conclude that for all $\alpha\in(0,2)$
\begin{align*}
    \left| \frac{A(d,\alpha)\omega_{d-1}}{d(2-\alpha)} - 1\right|
    &= \frac{|\rho(\alpha)|}{d2^{2-\alpha}\Gamma\left(2-\frac\alpha2\right)\Gamma\left(\frac{d}{2}\right)}\\
    &= \frac{|\rho(2)-\rho(\alpha)|}{d2^{2-\alpha}\Gamma\left(2-\frac\alpha2\right)\Gamma\left(\frac{d}{2}\right)}\\
    &\leq \frac{C(2-\alpha)\log(1+d)}{2^{2-\alpha}\Gamma\left(2-\frac\alpha2\right)}\\
    &\leq C(2-\alpha)\log(1+d).
\qedhere
\end{align*}
\end{proof}

The next lemma is a key step in proving our main result. Note that the term $\mathsf{J}_{11}$, which appears in the proof below, has a sharp estimate, depending on $d\log (1+d)$. This seems to indicate that the bound in Theorem~\ref{main1} \eqref{was-est} cannot be improved.
\begin{lemma}\label{genediff}
    Assume that \textup{\textbf{(H2)}} holds and let $\alpha_0 \in (1,2)$ be an arbitrary fixed number.
    Then for any $\alpha \in [\alpha_0,2)$ and $s\in(0,1]$,
    \begin{align*}
        \sup_{h \in \Lip(1)}
        \left\|(\mathscr{A}^Q-\mathscr{A}^P)Q_sh\right\|_\infty
        \leq \frac{C}{\alpha_0-1}\left(
        (3-\alpha)s^{-1/2}-\frac{1}{3-\alpha}\,s^{(1-\alpha)/2}
        \right) d\log(1+d).
    \end{align*}
\end{lemma}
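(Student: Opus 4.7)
The plan is to write $(\mathscr{A}^P-\mathscr{A}^Q)P_s h$ as an explicit integral against the $\alpha$-stable L\'evy measure, split at $|z|=1$, and Taylor-expand so that the quadratic Taylor term matches the Brownian Laplacian $\tfrac12\mathrm{tr}(\sigma\sigma^T\nabla^2\cdot)$ coming from $\mathscr{A}^Q$. Setting $f:=P_s h$ and using that $L_t$ has L\'evy measure $A(d,\alpha)|z|^{-d-\alpha}\,\dif z$ (while both generators share the drift $\langle b,\nabla\cdot\rangle$), I get
\begin{equation*}
    (\mathscr{A}^P-\mathscr{A}^Q)f(x)
    = A(d,\alpha)\int_{\rd}\Lambda(x,z)\,\frac{\dif z}{|z|^{d+\alpha}}
    - \frac{1}{2}\mathrm{tr}\bigl(\sigma\sigma^T\nabla^2 f(x)\bigr),
\end{equation*}
where $\Lambda(x,z):=f(x+\sigma z)-f(x)-\I_{\{|z|\leq 1\}}\langle\sigma z,\nabla f(x)\rangle$. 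Denote by $\mathsf{J}_2$ the contribution of $\{|z|>1\}$ and by $\mathsf{J}_1$ the remaining inner part together with the subtracted Laplacian. In $\mathsf{J}_1$, expand $f$ to second order and use the polar-coordinate identity
\begin{equation*}
    \int_{|z|\leq 1}\frac{\langle\sigma z,M\sigma z\rangle}{|z|^{d+\alpha}}\,\dif z
    = \frac{\omega_{d-1}}{d(2-\alpha)}\,\mathrm{tr}(\sigma\sigma^T M)
\end{equation*}
with $M=\nabla^2 f(x)$ to decompose $\mathsf{J}_1=\mathsf{J}_{11}+\mathsf{J}_{12}$, where
\begin{equation*}
    \mathsf{J}_{11}:=\tfrac12\Bigl(\tfrac{A(d,\alpha)\omega_{d-1}}{d(2-\alpha)}-1\Bigr)\mathrm{tr}(\sigma\sigma^T\nabla^2 f(x))
\end{equation*}
and $\mathsf{J}_{12}$ is the integral of the third-order Taylor remainder, which can be bounded pointwise either by $C\|\nabla^3 f\|_{\op,\infty}|z|^3$ or, more crudely, by $C\|\nabla^2 f\|_{\op,\infty}|z|^2$.

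The sharp step is $\mathsf{J}_{11}$: Lemma~\ref{crate} furnishes $|A(d,\alpha)\omega_{d-1}/(d(2-\alpha))-1|\leq C\log(1+d)(2-\alpha)$, and pairing this with the bound $|\mathrm{tr}(\sigma\sigma^T\nabla^2 f)|\leq Cd\|\nabla^2 f\|_{\op,\infty}$ and the gradient estimate \eqref{grad-2} gives $|\mathsf{J}_{11}|\leq C_{\alpha_0}d\log(1+d)(2-\alpha)s^{-1/\alpha}$. For $\mathsf{J}_{12}$ the cubic bound combined with $A(d,\alpha)\omega_{d-1}\leq Cd(2-\alpha)$ (from Lemma~\ref{crate}) and \eqref{grad-3} yields $C_{\alpha_0}d(2-\alpha)s^{-2/\alpha}$, while the quadratic-remainder estimate and \eqref{grad-2} yield $C_{\alpha_0}d\,s^{-1/\alpha}$; taking the minimum produces the $\{(2-\alpha)s^{-2/\alpha}\wedge s^{-1/\alpha}\}$ factor in the stated bound. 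For $\mathsf{J}_2$, the Lipschitz bound \eqref{grad-1} gives $|f(x+\sigma z)-f(x)|\leq C|z|$, so $|\mathsf{J}_2|\leq CA(d,\alpha)\omega_{d-1}/(\alpha-1)\leq C_{\alpha_0}d(2-\alpha)$, which is absorbed into the first summand of the asserted bound because $s^{-1/\alpha}\geq 1$ for $s\in(0,1]$.

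The main obstacle is preserving the $\log(1+d)(2-\alpha)$ factor in $\mathsf{J}_{11}$: any cruder handling of the prefactor $A(d,\alpha)\omega_{d-1}/(d(2-\alpha))-1$ leaves only an $O(1)$ remainder, which would destroy the $\alpha\uparrow 2$ rate; hence the proof hinges entirely on Lemma~\ref{crate}. The special role of the cutoff radius $1$ is that it makes the polar-coordinate identity produce exactly $A(d,\alpha)\omega_{d-1}/[d(2-\alpha)]$, the quantity Lemma~\ref{crate} controls. All remaining steps are bookkeeping with Taylor expansions and the gradient estimates of Lemma~\ref{grad}; the pointwise application of the generators is justified because $P_s h$ is three times differentiable with derivatives of linear growth, so it lies in $\DDom(\mathscr{A}^P)\cap\DDom(\mathscr{A}^Q)$ as discussed before \eqref{e:AWDef}.
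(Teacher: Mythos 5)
Your proposal is correct and follows essentially the same route as the paper: the same generator difference written against the L\'evy measure, the same splitting into $\mathsf{J}_{11}$ (handled via the symmetry identity plus Lemma~\ref{crate} and \eqref{grad-2}), $\mathsf{J}_{12}$ (the $\wedge$ of the cubic and quadratic remainder bounds via \eqref{grad-2}--\eqref{grad-3}), and $\mathsf{J}_2$ (Lipschitz bound, giving $C_{\alpha_0}d\log(1+d)(2-\alpha)$). The only cosmetic imprecision is attributing $A(d,\alpha)\omega_{d-1}\leq Cd(2-\alpha)$ to Lemma~\ref{crate}, which directly yields only $Cd\log(1+d)(2-\alpha)$ --- but that weaker bound is all the argument needs.
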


\begin{proof}
Set $f:=Q_sh$. Since
\begin{gather*}
    \mathscr{A}^Pf(x)
    = \langle\nabla f(x),b(x)\rangle + \int\limits_{\rd\setminus\{0\}}
    \left[f(x+\sigma z)-f(x)-\langle \nabla f(x),\sigma z\rangle\I_{\{|z|\leq1\}}\right]
    \frac{A(d,\alpha)}{|z|^{d+\alpha}}\,\dup z,
\intertext{and}
    \mathscr{A}^Qf(x)
    = \langle\nabla f(x),b(x)\rangle + \frac12\,\langle\nabla^2f(x),\sigma\sigma^{\top}\rangle_{\HS},
\end{gather*}
it follows that
\begin{align*}
    &(\mathscr{A}^P-\mathscr{A}^Q)f(x)\\
    &=\left\{ \int_{|z|\leq1}\left[f(x+\sigma z)-f(x) - \langle\nabla f(x),\sigma z\rangle\right] \frac{A(d,\alpha)}{|z|^{d+\alpha}}\,\dup z
    - \frac12\,\langle\nabla^2f(x),\sigma\sigma^{\top} \rangle_{\HS}\right\}\\
    &\quad\mbox{}+\int_{|z|>1}\left[f(x+\sigma z)-f(x)\right] \frac{A(d,\alpha)}{|z|^{d+\alpha}}\,\dup z\\
    &=:\mathsf{J}_1+\mathsf{J}_2.
\end{align*}
With the first inequality in Lemma \ref{crate} it is easy to check that for any $\alpha \in [\alpha_0,2)$ the following estimate holds:
\begin{gather*}
    \frac{A(d,\alpha)\omega_{d-1}}{\alpha-1}\leq \frac{C}{\alpha_0-1}(2-\alpha)d.
\end{gather*}
Because of \eqref{grad-1}, $f=P_sh \in \Lip$ if $h\in\Lip(1)$, and so for all $\alpha \in [\alpha_0,2)$,
\begin{align*}
    |\mathsf{J}_2|
    &\leq \int_{|z|>1}\left|f(x+\sigma z)-f(x)\right| \frac{A(d,\alpha)}{|z|^{d+\alpha}}\,\dup z\\
    &\leq C \int_{|z|>1}|z| \frac{A(d,\alpha)}{|z|^{d+\alpha}}\,\dup z\\
    &= C\frac{A(d,\alpha)\omega_{d-1}}{\alpha-1}\\
    &\leq \frac{C}{\alpha_0-1}(2-\alpha)d.
\end{align*}
We rewrite $\mathsf{J}_1$ in the following form:
\begin{align*}
    \mathsf{J}_1
    &= \int_{|z|\leq1}\left[f(x+\sigma z)-f(x) -\langle\nabla f(x),\sigma z\rangle\right] \frac{A(d,\alpha)}{|z|^{d+\alpha}}\,\dup z
    -\frac12\,\langle\nabla^2f(x),\sigma\sigma^{\top}\rangle_{\HS}\\
    &= \int_{|z|\leq1}\int_0^1\langle\nabla^2f(x+r\sigma z),(\sigma z)(\sigma z)^{\top}\rangle_{\HS}(1-r)\,\dup r \frac{A(d,\alpha)}{|z|^{d+\alpha}}\,\dup z
    - \frac12\,\langle\nabla^2f(x),\sigma\sigma^{\top}\rangle_{\HS}\\
    &= \left\{\int_{|z|\leq1}\int_0^1\langle\nabla^2f(x),(\sigma z)(\sigma z)^{\top}\rangle_{\HS}(1-r)\,\dup r \frac{A(d,\alpha)}{|z|^{d+\alpha}}\,\dup z
    -\frac12\,\langle\nabla^2f(x),\sigma\sigma^{\top}\rangle_{\HS}\right\}\\
    &\qquad\mbox{} + \int_{|z|\leq1}\int_0^1\langle\nabla^2f(x+r\sigma z)-\nabla^2f(x), (\sigma z)(\sigma z)^{\top}\rangle_{\HS}(1-r)\,\dup r\, \frac{A(d,\alpha)}{|z|^{d+\alpha}}\,\dup z\\
    &=: \mathsf{J}_{11}+\mathsf{J}_{12}.
\end{align*}
Using the symmetry of the measure $\varrho(\dif z) = |z|^{-d-\alpha}\dif z$, it is clear that $\int_{|z|\leq 1} z_i z_j\,\varrho(\dif z) = \delta_{ij} \frac 1d \int_{|z|\leq 1} |z|^2\,\varrho(\dif z)$, and so we get
\begin{align*}
    &\int_{|z|\leq1}\int_0^1\langle\nabla^2f(x),(\sigma z)(\sigma z)^{\top}\rangle_{\HS}(1-r)\,\dup r\frac{A(d,\alpha)}{|z|^{d+\alpha}}\,\dup z\\
    &\qquad= \frac12\int_{|z|\leq1}\langle\nabla^2f(x),\sigma(zz^{\top})\sigma^{\top}\rangle_{\HS}\frac{A(d,\alpha)}{|z|^{d+\alpha}}\,\dup z\\
    &\qquad=\frac12\,\frac{1}{d}\int_{|z|\leq1}\langle\nabla^2f(x),|z|^2\sigma\sigma^{\top}\rangle_{\HS}\frac{A(d,\alpha)}{|z|^{d+\alpha}}\,\dup z\\
    &\qquad=\frac{1}{2d}\,\langle\nabla^2f(x),\sigma\sigma^{\top}\rangle_{\HS}\int_{|z|\leq1}\frac{A(d,\alpha)}{|z|^{d+\alpha-2}}\,\dup z\\
    &\qquad=\frac{A(d,\alpha)\omega_{d-1}}{2d(2-\alpha)}\,\langle\nabla^2f(x),\sigma\sigma^{\top}\rangle_{\HS}.
\end{align*}
This, together with the second inequality in Lemma \ref{crate},
$\|\sigma\sigma^\top\|_\HS\leq\sqrt{d}\|\sigma\sigma^\top\|_\op\leq\sqrt{d}\|\sigma\|_\op^2$ and \eqref{grad-2}, implies
\begin{align*}
    |\mathsf{J}_{11}|
    &= \left|\int_{|z|\leq1}\int_0^1\langle\nabla^2f(x),(\sigma z)(\sigma z)^{\top}\rangle_{\HS}(1-r)\,\dup r \frac{A(d,\alpha)}{|z|^{d+\alpha}}\,\dup z
    -\frac12\,\langle\nabla^2f(x),\sigma\sigma^{\top}\rangle_{\HS}\right|\\
    &= \left|\frac12\left[\frac{A(d,\alpha)\omega_{d-1}}{d(2-\alpha)}-1\right]\langle\nabla^2f(x),\sigma\sigma^{\top}\rangle_{\HS}\right|\\
    &\leq C(2-\alpha)\log(1+d)\cdot\|\nabla^2f(x)\|_\HS\|\sigma\sigma^\top\|_\HS\\
    &\leq C(2-\alpha)\log(1+d)\cdot\sqrt{d}\|\nabla^2f(x)\|_\op\sqrt{d}\|\sigma\|_\op^2\\
    &\leq C(2-\alpha)d\log(1+d)\cdot\|\nabla^2f\|_{\op,\infty}\\
    &\leq C(2-\alpha)s^{-1/2}d\log(1+d).
\end{align*}
Now we turn to the estimate of $\mathsf{J}_{12}$. Since $f=P_sh$ and $h\in\Lip(1)$, we can use \eqref{grad-2} and \eqref{grad-3}
to see that for all $x\in\rd$,
\begin{align*}
    \left\|\nabla^2f(x+r\sigma z)-\nabla^2f(x)\right\|_\op
    &\leq C\left\{\left(\|\nabla^3f\|_{\op,\infty} r|\sigma z|\right)\wedge\|\nabla^2f\|_{\op,\infty}\right\}\\
    &\leq C\left\{\left(s^{-1}r|z|\right)\wedge s^{-1/2}\right\}.
\end{align*}
Using the first inequality in Lemma \ref{crate}, we have
\begin{gather*}
   \frac{A(d,\alpha)\omega_{d-1}}{2-\alpha}\leq C d.
\end{gather*}
Therefore,
\begin{align*}
    |\mathsf{J}_{12}|
    &\leq\int_{|z|\leq1}\int_0^1\left|\langle\nabla^2f(x+r\sigma z)-\nabla^2f(x),(\sigma z)(\sigma z)^{\top}\rangle_{\HS}\right|(1-r)\,\dup r\, \frac{A(d,\alpha)}{|z|^{d+\alpha}}\,\dup z\\
    &\leq\int_{|z|\leq1}\int_0^1\left\|\nabla^2f(x+r\sigma z)-\nabla^2f(x)\right\|_\op|\sigma z|^2
    (1-r)\,\dup r\, \frac{A(d,\alpha)}{|z|^{d+\alpha}}\,\dup z\\
    &\leq C \int_{|z|\leq1}\int_0^1\left\{ \left(s^{-1}r|z|\right)\wedge s^{-1/2}\right\}|z|^2(1-r)\,\dup r\, \frac{A(d,\alpha)}{|z|^{d+\alpha}}\,\dup z\\
    &\leq C \int_{|z|\leq1} \left\{\left(s^{-1}|z|\right)\wedge s^{-1/2}\right\} |z|^2 \frac{A(d,\alpha)}{|z|^{d+\alpha}}\,\dup z\\
    &=C\int_{|z|\leq s^{1/2}}s^{-1}|z|\frac{A(d,\alpha)}{|z|^{d+\alpha-2}}\,\dup z
    +C\int_{s^{1/2}<|z|\leq1}s^{-1/2}\frac{A(d,\alpha)}{|z|^{d+\alpha-2}}\,\dup z\\
    &=CA(d,\alpha)\omega_{d-1}\left(
    s^{-1}\int_0^{s^{1/2}}r^{2-\alpha}\,\dup r
    +s^{-1/2}\int_{s^{1/2}}^1r^{1-\alpha}\,\dup r
    \right)\\
    &=CA(d,\alpha)\omega_{d-1}\left(
    \frac{s^{(1-\alpha)/2}}{3-\alpha}+\frac{s^{-1/2}-s^{(1-\alpha)/2}}{2-\alpha}
    \right)\\
    &=C\frac{A(d,\alpha)\omega_{d-1}}{2-\alpha}\left(
    s^{-1/2}-\frac{1}{3-\alpha}\,s^{(1-\alpha)/2}
    \right)\\
    &\leq C\left(
    s^{-1/2}-\frac{1}{3-\alpha}\,s^{(1-\alpha)/2}
    \right)d.
\end{align*}
Combining all estimates, we get
for all $x\in\rd$ and $s\in(0,1]$,
\begin{align*}
    &\left|(\mathscr{A}^Q-\mathscr{A}^P)f(x)\right|\\
    &\leq |\mathsf{J}_{11}| + |\mathsf{J}_{12}|+|\mathsf{J}_{2}|\\
    &\leq \frac{C}{\alpha_0-1}(2-\alpha)s^{-1/2}d\log(1+d)
    +C\left(
    s^{-1/2}-\frac{1}{3-\alpha}\,s^{(1-\alpha)/2}
    \right)d\\
    &\leq \frac{C}{\alpha_0-1}(2-\alpha)s^{-1/2}d\log(1+d)
    +\frac{C}{\alpha_0-1}\left(
    s^{-1/2}-\frac{1}{3-\alpha}\,s^{(1-\alpha)/2}
    \right)d\log(1+d),
\end{align*}
which implies the claimed estimate.
\end{proof}

\begin{lemma}\label{geneint}
    Assume that \textup{\textbf{(H2)}} holds, and let $\alpha_0 \in (1,2)$ be an arbitrary fixed number.
    For all $t>0$ and $\alpha \in [\alpha_0,2)$,
    \begin{gather*}
        \sup_{h \in \Lip(1)} \left|\int_0^{t\wedge1}P_{t-s}(\mathscr{A}^Q-\mathscr{A}^P) Q_sh\,\dif s\right|
        \leq \frac{C}{\alpha_0-1}(2-\alpha)d\log(1+d).
    \end{gather*}
\end{lemma}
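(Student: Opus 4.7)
The plan is to bring the absolute value inside the time integral, exploit the fact that $Q_{t-s}$ is an $L^\infty$-contraction to reduce to a pointwise (in $x$) estimate, invoke Lemma~\ref{genediff} on the integrand, and then carry out a careful deterministic scalar integration in which the key logarithmic factor appears.

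First, since $(Q_r)_{r \geq 0}$ is a Markov semigroup, $|Q_{t-s} g(x)| \leq \sup_y |g(y)|$ for every bounded measurable $g$. Applying this with $g = (\mathscr{A}^P - \mathscr{A}^Q) P_s h$ and then using the pointwise bound of Lemma~\ref{genediff}, we obtain, for all $h \in \Lip(1)$, $x \in \rd$ and $s \in (0, t \wedge 1]$,
\[
\bigl|Q_{t-s}(\mathscr{A}^P - \mathscr{A}^Q) P_s h(x)\bigr| \leq C_{\alpha_0}\, d\log(1+d)\, \Bigl[(2-\alpha) s^{-1/\alpha} + \bigl\{[(2-\alpha) s^{-2/\alpha}] \wedge s^{-1/\alpha}\bigr\}\Bigr].
\]
The right-hand side is independent of $x$, $t$, and $h$, so the task is reduced to integrating it in $s$ over $(0, t\wedge 1]\subseteq (0,1]$.

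Write $\beta := 2 - \alpha \in (0, 2-\alpha_0]$. The first summand is immediate: $\int_0^1 \beta s^{-1/\alpha} \, \dif s = \alpha\beta/(\alpha-1) \leq C_{\alpha_0} \beta$, using $\alpha \geq \alpha_0 > 1$. For the $\min$-term the two branches coincide at $s^\ast = \beta^\alpha$, with $s^{-1/\alpha}$ being the smaller one on $(0, s^\ast]$ and $\beta s^{-2/\alpha}$ the smaller one on $(s^\ast, 1]$. Splitting the integral at $s^\ast$ and integrating explicitly gives
\[
\int_0^1 \min\{\beta s^{-2/\alpha},\, s^{-1/\alpha}\} \, \dif s = \frac{\alpha}{\alpha - 1}\,\beta^{\alpha - 1} + \alpha\bigl(\beta^{-\beta} - 1\bigr).
\]

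The heart of the argument is the analysis of $\beta^{-\beta} - 1 = \exp(\beta \log(1/\beta)) - 1$. Since $\beta\log(1/\beta) \leq 1/e$ on $(0,1)$ and $e^t - 1 \leq Ct$ on any bounded interval, we get $\beta^{-\beta} - 1 \leq C\beta \log(1/\beta)$; this is precisely the source of the $\log\frac{1}{2-\alpha}$ factor in the claimed bound. Similarly, $\beta^{\alpha-1} = \beta \cdot \beta^{-\beta} \leq C\beta$. Since $\log(1/\beta)$ is bounded below by the positive constant $\log(1/(2-\alpha_0))$ on $(0, 2-\alpha_0]$, every plain $\beta$-contribution is absorbed into $C_{\alpha_0}\beta\log(1/\beta)$, and assembling the pieces yields the claimed estimate. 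The main obstacle is simply recognizing that the crossover in the minimum occurs at $s^\ast = \beta^\alpha$ and identifying the asymptotic $\beta^{-\beta} - 1 \sim \beta\log(1/\beta)$ as the mechanism that produces the extra $\log\frac{1}{2-\alpha}$; everything else is routine once the right splitting of the domain is in place.
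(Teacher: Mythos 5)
Your proposal is correct and follows essentially the same route as the paper: pass the absolute value inside, use that $Q_{t-s}$ is an $L^\infty$-contraction, apply Lemma~\ref{genediff}, and split the minimum at $s^\ast=(2-\alpha)^\alpha$, arriving at exactly the same quantities $\frac{\alpha}{\alpha-1}(2-\alpha)^{\alpha-1}$ and $\alpha[(2-\alpha)^{\alpha-2}-1]$. Your elementary bound $\beta^{-\beta}-1=\eup^{\beta\log(1/\beta)}-1\leq C\beta\log(1/\beta)$ is just a quantitative version of the limit comparison the paper uses, so there is nothing substantive to add.
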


\begin{proof}
By Lemma \ref{genediff}, for all $h \in \Lip(1)$ and $\alpha \in [\alpha_0,2)$,
\begin{align*}
    &\left|\int_0^{t\wedge1}P_{t-s}(\mathscr{A}^Q-\mathscr{A}^P) Q_sh\,\dif s\right|\\
    &\qquad\leq \int_0^{t\wedge1} \left|P_{t-s}(\mathscr{A}^Q-\mathscr{A}^P) Q_sh\right| \dif s\\
    &\qquad\leq \int_0^{1} \left\|(\mathscr{A}^Q-\mathscr{A}^P) Q_sh\right\|_\infty \dif s\\
    &\qquad\leq \frac{C}{\alpha_0-1}\left(
    (3-\alpha)\int_0^1s^{-1/2}\,\dup s
    -\frac{1}{3-\alpha}\int_0^1s^{(1-\alpha)/2}\,\dup s
    \right)d\log(1+d)\\
    &\qquad=\frac{C}{\alpha_0-1}\,
    \frac{2(\alpha^2-7\alpha+13)(2-\alpha)}{(3-\alpha)^2}\,
    d\log(1+d)\\
    &\qquad\leq \frac{C}{\alpha_0-1}(2-\alpha)d\log(1+d).
    \qedhere
\end{align*}
\end{proof}

After these preparations we can now proceed with the proof of Theorem~\ref{main1}.

\begin{proof}[Proof of Theorem \ref{main1}]
(1) Note that
\begin{gather*}
     W_{1}\left(\law (X_{t}^x), \law (Y_{t}^y)\right)
     \leq
     W_{1}\left(\law (X_{t}^x), \law (Y_{t}^x)\right)
     +W_{1}\left(\law (Y_{t}^x), \law (Y_{t}^y)\right).
\end{gather*}
According to \cite[Corollary 2]{E16}, for any $t>0$ and $x,y\in \rd$,
    \begin{equation}\label{expcontr}
        W_{1}\left(\law (Y_{t}^x), \law (Y_{t}^y)\right)
        \le C_{1}e^{-C_{2}t}|x-y|.
    \end{equation}
Thus, it suffices to prove that for any $t>0$, $x\in \rd$ and $\alpha\in[\alpha_0,2)$,
\begin{equation}\label{samestarting}
    W_{1}\left(\law (X_{t}^x), \law (Y_{t}^x)\right)
    \leq \frac{C}{\alpha_0-1}(2-\alpha)d\log(1+d).
\end{equation}

Recall that
\begin{gather*}
    W_{1}\left(\law (X_{t}^x), \law (Y_{t}^x)\right)
    =\sup_{h\in\Lip(1)} |Q_th(x)-P_th(x)|
\end{gather*}
and use that
\begin{gather*}
    Q_t h-P_t h
    = \int_0^t \frac{\dif}{\dif s}\,P_{t-s} Q_s h \,\dif s
    =\int_0^t P_{t-s} (\mathscr{A}^Q-\mathscr{A}^P)Q_s h\,\dif s,
\end{gather*}
to get
\begin{equation}\label{dis11}
    W_{1}\left(\law (X_{t}^x), \law (Y_{t}^x)\right)
    = \sup_{h\in\Lip(1)} \left| \int_0^t P_{t-s} (\mathscr{A}^Q-\mathscr{A}^P)Q_s h(x) \,\dif s \right|.
\end{equation}
If $t\in(0,1]$, the desired estimate \eqref{samestarting} follows immediately from \eqref{dis11} and Lemma \ref{geneint}. We will now
consider the case $t>1$. By \eqref{dis11},
\begin{align*}
    W_{1}\left(\law (X_{t}^x), \law (Y_{t}^x)\right)
    &\leq \sup_{h\in\Lip(1)}\left| \int_0^1P_{t-s}(\mathscr{A}^Q-\mathscr{A}^P) Q_sh(x)\,\dif s \right|\\
    &\quad \mbox{} + \sup_{h\in\Lip(1)}\left|\int_1^tP_{t-s}(\mathscr{A}^Q-\mathscr{A}^P) Q_sh(x)\,\dif s \right|\\
    &=:\mathsf{I}_1+\mathsf{I}_2.
\end{align*}
Lemma \ref{geneint} shows that for all $\alpha\in[\alpha_0,2)$,
\begin{gather*}
    \mathsf{I}_1
    \leq \frac{C}{\alpha_0-1}(2-\alpha)d\log(1+d).
\end{gather*}
By \eqref{expcontr}, for $h\in\Lip(1)$ and $s>1$,
\begin{gather*}
    \left|Q_{s-1}h(x)-Q_{s-1}h(y)\right|
    \leq C_1\eup^{-C_2(s-1)}|x-y|,\quad x,y\in\rd.
\end{gather*}
Combining this with the semigroup property and Lemma \ref{genediff} with $s=1$, gives for all $x\in\rd$ and $\alpha\in[\alpha_0,2)$,
\begin{align*}
    \mathsf{I}_2
    &=\sup_{h\in\Lip(1)}\left| \int_1^tP_{t-s}(\mathscr{A}^Q-\mathscr{A}^P) Q_1(Q_{s-1}h)(x)\,\dif s \right|\\
    &\leq\sup_{g\in\Lip(1)}\left| \int_1^tC_1\eup^{-C_2(s-1)} P_{t-s}(\mathscr{A}^Q-\mathscr{A}^P) Q_1g(x)\,\dif s \right|\\
    &\leq\sup_{g\in\Lip(1)}\left\|(\mathscr{A}^Q-\mathscr{A}^P) Q_1g\right\|_\infty
    \cdot\int_1^tC_1\eup^{-C_2(s-1)} \,\dif s\\
    &\leq\frac{C}{\alpha_0-1}\,\frac{(4-\alpha)(2-\alpha)}{3-\alpha}\,d\log(1+d)\cdot\int_0^\infty C_1\eup^{-C_2s} \,\dif s\\
    &\leq\frac{C}{\alpha_0-1}(2-\alpha)d\log(1+d).
\end{align*}
Combining these two estimates implies \eqref{samestarting} for $t>1$. This proves the first assertion.

\medskip\noindent
(2)  From       the classical ergodic theory for Markov processes,  see e.g.\ \cite{M-T3},       it follows that for all $x\in\rd$
\begin{gather*}
    \lim_{t\to\infty}W_{1}\left(\law (X_{t}^x), \mu_\alpha\right)=0,
\end{gather*}
see e.g.\ \cite[(1.10)]{CDSX22} for details. Using        a similar argument, it is not hard to verify that
\begin{gather*}
    \lim_{t\to\infty}W_{1}\left(\law (Y_{t}^y), \mu_2\right)=0, \quad y\in\rd.
\end{gather*}
Since
\begin{gather*}
    W_{1}\left(\mu_\alpha,\mu_2\right)\leq W_{1}\left(\mu_\alpha,\law (X_{t}^x)\right)
    +W_{1}\left(\law (X_{t}^x),\law (Y_{t}^y)\right)
    +W_{1}\left(\law (Y_{t}^y),\mu_2\right),
\end{gather*}
the second assertion follows immediately from the first one with $t\rightarrow\infty$.
\end{proof}

\section{Proof of Lemma \ref{grad}}\label{gradproof}

\subsection{Gradient estimates for the SDE \eqref{BM-SDE}}

We consider the derivative of $Y_t^y$ with respect to the initial value $y\in\rd$. For $v\in\rd$, the directional derivative $\nabla_vY_t^y$ in direction $v$ is defined by
\begin{gather*}
    \nabla_vY_t^y=\lim_{\epsilon\to 0} \frac{Y_t^{y+\epsilon v}-Y_t^y}{\epsilon},
    \quad t\geq 0.
\end{gather*}
The above limit exists and satisfies the formally differentiated SDE
\begin{equation}\label{deriproc}
    \frac{\dif}{\dif t}\nabla_vY_t^y = \nabla b(Y_t^y)\nabla_vY_t^y,
    \quad \nabla_vY_0^y=v.
\end{equation}
In a similar way we can define for $v_i\in\rd$, $i=1,2,3$ the directional derivatives $\nabla_{v_2}\nabla_{v_1}Y_t^y$ and $\nabla_{v_3}\nabla_{v_2}\nabla_{v_1}Y_t^y$, which satisfy
\begin{gather*}
    \frac{\dif}{\dif t}\nabla_{v_2}\nabla_{v_1}Y_t^y
    =\nabla b(Y_t^y)\nabla_{v_2}\nabla_{v_1}Y_t^y + \nabla^2b(Y_t^y)\nabla_{v_2}Y_t^y\nabla_{v_1}Y_t^y
\end{gather*}
with $\nabla_{v_2}\nabla_{v_1}Y_0^y=0$, and
\begin{equation}\label{3rdd}
\begin{aligned}
    &\frac{\dif}{\dif t}\nabla_{v_3}\nabla_{v_2}\nabla_{v_1}Y_t^y
    =\nabla b(Y_t^y)\nabla_{v_3}\nabla_{v_2}\nabla_{v_1}Y_t^y
    +\nabla^3b(Y_t^y)\nabla_{v_3}Y_t^y\nabla_{v_2}Y_t^y
    \nabla_{v_1}Y_t^y\\
    &\qquad\mbox{}+\nabla^2b(Y_t^y)\left(
    \nabla_{v_3}Y_t^y\nabla_{v_2}\nabla_{v_1}Y_t^y
    +\nabla_{v_2}Y_t^y\nabla_{v_3}\nabla_{v_1}Y_t^y
    +\nabla_{v_1}Y_t^y\nabla_{v_3}\nabla_{v_2}Y_t^y
    \right),
\end{aligned}
\end{equation}
with $\nabla_{v_3}\nabla_{v_2}\nabla_{v_1}Y_0^y=0$.

\begin{lemma}\label{xgrad}
Assume \textup{\textbf{(H2)}}. Then for all $v_i\in\rd$, $i=1,2,3$, $y\in\rd$ and $t\in[0,1]$,
\begin{align}\label{gradesti-1}
    |\nabla_{v_1}Y_t^y|
    &\leq C |v_1|,
\\ \label{gradesti-2}
    |\nabla_{v_2}\nabla_{v_1}Y_t^y|
    &\leq C|v_1||v_2|,
\\ \label{gradesti-3}
    |\nabla_{v_3}\nabla_{v_2}\nabla_{v_1}Y_t^y|
    &\leq C|v_1||v_2||v_3|.
\end{align}
\end{lemma}

\begin{proof}
The estimates \eqref{gradesti-1} and \eqref{gradesti-2} can be proved as in \cite[Lemma 3.1]{CDSX22}.
Here we prove \eqref{gradesti-3} in a similar way. Set $\zeta(t):=\nabla_{v_3}\nabla_{v_2}\nabla_{v_1}Y_t^y$. We get from \eqref{3rdd}, \eqref{H1-1'}--\eqref{H1-3} and \eqref{gradesti-1}, \eqref{gradesti-2} that
\begin{align*}
    &\frac{\dif}{\dif t}|\zeta(t)|^2\\
    &= 2\left\langle\zeta(t),\nabla b(Y_t^y)\zeta(t)\right\rangle
    + 2\left\langle\zeta(t),\nabla^3 b(Y_t^y) \nabla_{v_3}Y_t^y\nabla_{v_2}Y_t^y\nabla_{v_1}Y_t^y \right\rangle
\\
    &\quad\mbox{}+2\left\langle\zeta(t),\nabla^2 b(Y_t^y)
        \left(
        \nabla_{v_3}Y_t^y\nabla_{v_2}\nabla_{v_1}Y_t^y
        +\nabla_{v_2}Y_t^y\nabla_{v_3}\nabla_{v_1}Y_t^y
        +\nabla_{v_1}Y_t^y\nabla_{v_3}\nabla_{v_2}Y_t^y
        \right)\right\rangle
\\
    &\leq 2\theta_1|\zeta(t)|^2+2\theta_3
        |\zeta(t)||\nabla_{v_1}Y_t^y||\nabla_{v_2}Y_t^y| |\nabla_{v_3}Y_t^y|
\\
    &\quad\mbox{}+2\theta_2|\zeta(t)|
        \left(|\nabla_{v_3}Y_t^y|
        |\nabla_{v_2}\nabla_{v_1}Y_t^y|
        +|\nabla_{v_2}Y_t^y|
        |\nabla_{v_3}\nabla_{v_1}Y_t^y|
        +|\nabla_{v_1}Y_t^y|
        |\nabla_{v_3}\nabla_{v_2}Y_t^y|
        \right)
\\
    &\leq2\theta_1|\zeta(t)|^2+C|\zeta(t)|
        |v_1||v_2||v_3|\\
        &\leq C|\zeta(t)|^2+C|v_1|^2|v_2|^2|v_3|^2.
\end{align*}
In the last line we use the elementary inequality $\pm 2z_1z_2\leq z_1^2 + z_2^2$. Noting that $\zeta(0)=0$, we get for $t>0$
\begin{gather*}
    |\zeta(t)|^2\leq C|v_1|^2|v_2|^2|v_3|^2 \int_0^t\eup^{C(t-r)}\,\dup r
    = \left(\eup^{Ct}-1\right) |v_1|^2|v_2|^2|v_3|^2.
\end{gather*}
This proves \eqref{gradesti-3} for all $t\in[0,1]$.
\end{proof}

\subsection{Bismut's formula}\label{malliavin}

Let $u\in L_{\loc}^{2}([0,\infty)\times(\Omega,\mathscr{F},\mathds{P});\rd)$, i.e.\ $\Ee\int_{0}^{t}|u(s)|^{2}\,\dif s<\infty$ for all $t>0$. Let $(B_{t})_{t\geq 0}$ be a standard Brownian motion on $\rd$ and assume that $u$ is adapted to the filtration $(\mathscr{F}_{t})_{t\geq 0}$ with $\mathscr{F}_{t}:=\sigma(B_{s}:0\leq s\leq t)$, and define $U:[0,\infty)\to\rd$ by
\begin{gather*}
    U_{t}:=\int_{0}^{t}u(s)\,\dif s,\quad t\geq 0.
\end{gather*}
For $t>0$, let $F_{t}:C([0,t],\rd)\to\real^m$ be an $\mathscr{F}_{t}$ measurable map, where $m\in\nat$. If the following limit exists
\begin{align*}
D_{U}F_{t}(B)=\lim_{\epsilon\to0}\frac{F_{t}(B+\epsilon U)-F_{t}(B)}{\epsilon}
\end{align*}
in $L^{2}((\Omega,\mathscr{F},\mathds{P});\real^m)$, then $F_{t}(B)$ is said to be \emph{Malliavin differentiable}, and $D_{U}F_{t}(B)$ is called the Malliavin derivative of $F_{t}(B)$ in the direction $U$.

Let both $F_{t}(B)$ and $G_{t}(B)$ be Malliavin differentiable. Then the following product rule holds:
\begin{gather*}
    D_{U}\left( F_{t}(B)G_{t}(B)\right)
    =  D_{U}F_{t}(B)G_{t}(B)  + F_{t}(B)D_{U}G_{t}(B).
\end{gather*}
If $F_t(B)$ has the following structure,
\begin{align*}
    F_{t}(B) = \int_{0}^{t}a_s\,\dif B_s,
\end{align*}
where $a_s\in\rd$ is an $\mathscr{F}_{s}$-adapted stochastic process such that $\mathds{E}\int_{0}^{t}|a_s|^{2}\,\dif s<\infty$ for all $t>0$, then
\begin{align}\label{chain}
    D_{U}F_{t}(B)
    = \int_{0}^{t}\langle a_s,u(s)\rangle\,\dif s + \int_{0}^{t} D_{U}a_s\,\dif B_{s}.
\end{align}
Note that $a_s$ is a $d$-dimensonal functional of Brownian motion $(B_r)_{0\leq r\leq s}$, and here we write
$D_{U}a_s=D_{U}a_s(B)$ for simplicity.

We will use the following integration by parts formula, which is also called \emph{Bismut's formula}. For any Malliavin differentiable $F_{t}(B)$ such that $F_{t}(B),D_{U}F_{t}(B)\in L^{2}((\Omega,\mathscr{F},\mathds{P});\real)$, we have
\begin{align}\label{bismut}
    \Ee[D_{U}F_{t}(B)]
    = \Ee \left[F_{t}(B)\int_{0}^{t} u(s)\,\dif B_{s}\right].
\end{align}

Let $\varphi\in \Lip(1)$ and let $F_{t}(B)$ be a $d$-dimensional Malliavin differentiable functional. Then the following chain rule holds:
\begin{align*}
    D_{U}\varphi(F_{t}(B))
    = \langle\nabla\varphi(F_{t}(B)),D_{U}F_{t}(B)\rangle.
\end{align*}

{     
If $F_{t}=(F_t^{(1)},\dots,F_t^{(d)}):C([0,t],\rd)\to\real^{d\times d}$, where $F_t^{(i)}\in\real^d$ 
is $\mathscr{F}_{t}$ measurable, then we can define $D_UF_t(B)=(D_UF_t^{(1)}(B),\dots,D_UF_t^{(d)}(B))$.
}

\subsection{Malliavin derivative estimates for the SDE \eqref{BM-SDE}}

For $t>0$ and $v_1,v_2,v_3,y\in\rd$, define $u_{t,i},U_{t,i}:[0,t] \to\rd$ by
\begin{gather*}
    u_{t,i}(s)
    := \frac{1}{t}\,\nabla_{v_i} Y_s^y
    \quad\text{and}\quad
    U_{t,i;s} := \int_0^su_{t,i}(r)\,\dif r
\end{gather*}
for $i=1,2,3$ and $s\in[0,t]$. Clearly,
\begin{align}\label{equality}
    D_{U_{t,i}}Y_s^y
    = \frac{s}{t} \,\nabla_{v_i}Y_s^y,
    \quad 0\leq s\leq t.
\end{align}
This, together with \eqref{deriproc}, implies that for $s\in[0,t]$
\begin{align}\label{SMalliavin}
    D_{U_{t,2}}\nabla_{v_{1}}Y_s^y
    = \int_0^{s}\left[\nabla^{2} b\left(Y_r^y\right)
    D_{U_{t,2}} Y_r^y\nabla_{v_{1}} Y_r^y +
    \nabla b\left(Y_r^y\right) D_{U_{t,2}}\nabla_{v_{1}}Y_r^y \right]\dif r.
\end{align}

The argument which we used in the proof of Lemma \ref{xgrad} gives the following upper bounds on the Malliavin derivatives.
\begin{lemma}
Assume \textup{\textbf{(H2)}}. For all $v_i\in\rd$, $i=1,2,3$, $y\in\rd$ and $t\in[0,1]$,
\begin{align}\label{dgradesti-1}
    |D_{U_{t,2}}\nabla_{v_1}Y_t^y|
    &\leq C|v_1||v_2|,
\\ \label{dgradesti-2}
    |D_{U_{t,3}} \nabla_{v_2}\nabla_{v_1}Y_t^y|
    &\leq C|v_1||v_2||v_3|,
\\ \label{dgradesti-3}
    |D_{U_{t,1}}D_{U_{t,3}}\nabla_{v_2}Y_t^y|
    &\leq C|v_1||v_2||v_3|.
\end{align}
\end{lemma}

\begin{proof}
From \eqref{SMalliavin} and \eqref{equality} we see that
\begin{gather*}
    \frac{\dif}{\dif t}\,D_{U_{t,2}} \nabla_{v_1}Y_t^y
    = \nabla b(Y_t^y) D_{U_{t,2}} \nabla_{v_1}Y_t^y
        +\nabla^2 b(Y_t^y)
        \nabla_{v_2}Y_t^y
        \nabla_{v_1}Y_t^y.
\end{gather*}
Repeating the argument used in the proof of Lemma \ref{xgrad}, we get \eqref{dgradesti-1} for all $t\in[0,t]$.
The estimates \eqref{dgradesti-2} and \eqref{dgradesti-3} can be proved in a similar way.
\end{proof}

\subsection{Proof of Lemma \ref{grad}}

We will frequently use the following mollifier: Let $g_\delta$ be the density of the $d$-dimensional normal distribution $N(0,\delta^{2}I_{d})$,  $\delta>0$. We define for every $h\in\Lip(1)$
\begin{gather*}
    h_\delta(x)
    :=\int_{\rd} g_\delta(y) h(x-y)\,\dif y.
\end{gather*}
It is easy to see that $h_\delta$ is smooth, $\lim_{\delta\downarrow0}h_\delta(x)=h(x)$ for all $x\in\rd$, and
\begin{gather*}
    \|\nabla h_\delta\|_{\infty}
    \leq \|\nabla h\|_{\infty}\leq 1.
\end{gather*}

\subsubsection{Proof of \eqref{grad-1}}\label{subsec:pf-grad-1}
The dominated convergence theorem
and \eqref{gradesti-1} imply for any $v\in\rd$  and $t\in[0,1]$
\begin{gather*}
    |\nabla_vQ_th_\delta(y)|
    = |\Ee[\nabla h_\delta(Y_t^y) \nabla_vY_t^y]|
    \leq \|\nabla h_\delta\|_{\infty} \Ee|\nabla_vY_t^y|
    \leq C|v|.
\end{gather*}
Since it holds from the dominated convergence theorem that
\begin{gather*}
    \lim_{\delta\downarrow0}\nabla_vQ_th_\delta(y)=\nabla_vQ_th(y),
\end{gather*}
the desired estimate follows by letting $\delta\downarrow0$.

\subsubsection{Proof of \eqref{grad-2}}\label{subsec:pf-grad-2}

We use the dominated convergence theorem to see that for all $v_1,v_2,y\in\rd$
\begin{equation}\label{2ndestidelta}
    \nabla_{v_{2}}\nabla_{v_{1}}\Ee\left[h_\delta(Y_t^y)\right]
    = \Ee\left[\nabla h_\delta(Y_t^y)\nabla_{v_{2}}\nabla_{v_{1}}Y_t^y\right]
    + \Ee\left[\nabla^{2}h_\delta(Y_t^y)\nabla_{v_{2}}Y_t^y \nabla_{v_{1}}Y_t^y\right].
\end{equation}
Now we use \eqref{gradesti-2} to get for $t\in[0,1]$,
\begin{equation}\label{1stestidelta}
    \left|\Ee\left[\nabla h_\delta(Y_t^y)\nabla_{v_{2}} \nabla_{v_{1}}Y_t^y\right]\right|
    \leq \|\nabla h_\delta\|_{\infty} \Ee\left[\left| \nabla_{v_{2}} \nabla_{v_{1}}Y_t^y \right|\right]
    \leq C|v_{1}||v_{2}|.
\end{equation}
From \eqref{equality} and \eqref{bismut} we get
\begin{align*}
    &\Ee\left[\nabla^{2}h_\delta(Y_t^y) \nabla_{v_{2}}Y_t^y \nabla_{v_{1}}Y_t^y\right]\\
    &\quad= \Ee\left[\nabla^{2}h_\delta(Y_t^y) D_{U_{t,2}}Y_t^y \nabla_{v_{1}}Y_t^y\right]\\
    &\quad= \Ee\left[D_{U_{t,2}}\left(\nabla h_\delta(Y_t^y)\right) \nabla_{v_{1}}Y_t^y\right]\\
    &\quad= \Ee\left[D_{U_{t,2}}\left(\nabla h_\delta(Y_t^y) \nabla_{v_{1}}Y_t^y\right)\right]
    -\Ee\left[\nabla h_\delta(Y_t^y) D_{U_{t,2}} \nabla_{v_{1}}Y_t^y \right]\\
    &\quad= \frac{1}{t}\,\Ee\left[ \nabla h_\delta(Y_t^y) \nabla_{v_{1}}Y_t^y
    \int_0^t\nabla_{v_{2}}Y_s^y\, \dif B_s \right]
    -\Ee\left[\nabla h_\delta(Y_t^y) D_{U_{t,2}} \nabla_{v_{1}}Y_t^y\right].
\end{align*}
Combining this with \eqref{gradesti-1} and \eqref{dgradesti-1}, It\^{o}'s isometry shows
for $t\in(0,1]$
\begin{align*}
    &\left|\Ee\left[\nabla^{2}h_\delta(Y_t^y) \nabla_{v_{2}}Y_t^y \nabla_{v_{1}}Y_t^y\right]\right|\\
    &\leq \frac{1}{t}\, \|\nabla h_\delta\|_{\infty}
    \Ee\left[\left|\nabla_{v_{1}}Y_t^y\right| \left|
    \int_0^t\nabla_{v_{2}}Y_s^y\, \dif B_s\right|\right]
    + \|\nabla h_\delta\|_{\infty} \Ee\left[\left| D_{U_{t,2}} \nabla_{v_{1}}Y_t^y\right| \right]\\
    &\leq \frac{C}{t}\, |v_1| \left[\Ee \left|\int_0^t\nabla_{v_{2}}Y_s^y\, \dif B_s\right|^2\right]^{1/2}
    + C|v_1||v_2| \\
    &\leq \frac{C}{t}\,|v_1||v_2| \sqrt{t} + C|v_1||v_2|\\
    &\leq C|v_1||v_2| t^{-1/2}.
\end{align*}    
Inserting this and \eqref{1stestidelta} into \eqref{2ndestidelta}, we get for $t\in (0,1])$,     
\begin{align*}
    \left|\nabla_{v_{2}}\nabla_{v_{1}} \Ee[h_\delta(Y_t^y)]\right|
    &\leq \left|\Ee[\nabla h_\delta(Y_t^y)\nabla_{v_{2}}\nabla_{v_{1}}Y_t^y]\right|
    + \left|\Ee\left[\nabla^{2}h_\delta(Y_t^y) \nabla_{v_{2}}Y_t^y \nabla_{v_{1}}Y_t^y\right]\right|\\
    &\leq C|v_1||v_2| t^{-1/2}.
\end{align*}
We can now let $\delta\downarrow0$ and we get with the help of the dominated convergence theorem
\begin{gather*}
    \lim_{\delta\downarrow0}\nabla_{v_{2}}\nabla_{v_{1}} \Ee[h_\delta(Y_t^y)]
    = \nabla_{v_{2}}\nabla_{v_{1}} \Ee[h(Y_t^y)]
    = \nabla_{v_{2}}\nabla_{v_{1}}Q_th(y).
\end{gather*}
Therefore, we get for all $v_1,v_2,y\in\rd$ and $t\in(0,1]$,
\begin{gather*}
    |\nabla_{v_{2}}\nabla_{v_{1}}Q_th(y)|
    \leq C|v_1||v_2|
    t^{-1/2},
\end{gather*}
and this implies \eqref{grad-2}.

\subsubsection{Proof of \eqref{grad-3}}\label{subsec:pf-grad-3}

We use the dominated convergence theorem to see that for all $v_1,v_2,v_3\in\rd$,
\begin{align*}
    &\nabla_{v_{3}}\nabla_{v_{2}}\nabla_{v_{1}}\Ee\left[h_\delta(Y_t^y)\right]\\
    &\quad=\Ee\left[\nabla h_\delta(Y_t^y)\nabla_{v_{3}} \nabla_{v_{2}}\nabla_{v_{1}}Y_t^y\right]
        +\Ee\left[\nabla^{2}h_\delta(Y_t^y)\nabla_{v_{3}}Y_t^y \nabla_{v_{2}}\nabla_{v_{1}}Y_t^y\right]\\
    &\qquad\mbox{} +\Ee\left[\nabla^{2}h_\delta(Y_t^y)\nabla_{v_{2}}Y_t^y \nabla_{v_{3}}\nabla_{v_{1}}Y_t^y\right]
        +\Ee\left[\nabla^{2}h_\delta(Y_t^y)\nabla_{v_{1}}Y_t^y \nabla_{v_{3}}\nabla_{v_{2}}Y_t^y\right]\\
    &\qquad\mbox{}+\Ee\left[\nabla^{3}h_\delta(Y_t^y)\nabla_{v_{3}}Y_t^y \nabla_{v_{2}}Y_t^y\nabla_{v_{1}}Y_t^y\right]\\
    &\quad=:\mathsf{I}_1+\mathsf{I}_2+\mathsf{I}_3+\mathsf{I}_4+\mathsf{I}_5.
\end{align*}
We will estimate the terms $\mathsf{I}_k$, $k=1,\dots,5$ separately. With \eqref{gradesti-3} we get for $t\in(0,1]$,
\begin{gather*}
    |\mathsf{I}_1|
    \leq \|\nabla h_\delta\|_{\infty} \Ee\left[\left| \nabla_{v_{3}}\nabla_{v_{2}} \nabla_{v_{1}}Y_t^y \right|\right]
    \leq C|v_{1}||v_{2}||v_{3}|.
\end{gather*}
Now we turn to $\mathsf{I}_2$. By \eqref{equality}     (with $s=t$), the chain rule       and \eqref{bismut},
\begin{align*}
    \mathsf{I}_2
    &= \Ee\left[\nabla^{2}h_\delta(Y_t^y) D_{U_{t,3}}Y_t^y \nabla_{v_{2}}\nabla_{v_{1}}Y_t^y\right]\\
    &= \Ee\left[D_{U_{t,3}}\left(\nabla h_\delta(Y_t^y)\right) \nabla_{v_{2}}\nabla_{v_{1}}Y_t^y\right]\\
    &= \Ee\left[D_{U_{t,3}}\left(\nabla h_\delta(Y_t^y) \nabla_{v_{2}}\nabla_{v_{1}}Y_t^y \right)\right]
        - \Ee\left[\nabla h_\delta(Y_t^y) D_{U_{t,3}} \nabla_{v_{2}}\nabla_{v_{1}}Y_t^y \right]\\
    &= \frac{1}{t}\,\Ee\left[ \nabla h_\delta(Y_t^y) \nabla_{v_{2}}\nabla_{v_{1}}Y_t^y
        \int_0^t\nabla_{v_{3}}Y_s^y\, \dif B_s \right]
        - \Ee\left[\nabla h_\delta(Y_t^y) D_{U_{t,3}} \nabla_{v_{2}}\nabla_{v_{1}}Y_t^y \right].
\end{align*}
Combining this with \eqref{gradesti-1}, \eqref{gradesti-2},  \eqref{dgradesti-2} and It\^{o}'s isometry,
we get for $t\in(0,1]$,
\begin{align*}
    |\mathsf{I}_2|
    &\leq \frac{C}{t}\, \|\nabla h_\delta\|_{\infty} |v_1||v_2|
        \left[\Ee \left| \int_0^t\nabla_{v_{3}}Y_t^y\, \dif B_s\right|^2\right]^{1/2}
        +\|\nabla h_\delta\|_{\infty} \Ee\left[\left| D_{U_{t,3}} \nabla_{v_{2}}\nabla_{v_{1}}Y_t^y\right| \right]\\
    &\leq \frac{C}{t}\,|v_1||v_2||v_3| \sqrt{t} + C|v_1||v_2||v_3|\\
    &\leq C|v_1||v_2||v_3| t^{-1/2}.
\end{align*}
In a similar way we can show that
\begin{gather*}
    |\mathsf{I}_3| + |\mathsf{I}_4|  \leq C|v_1||v_2||v_3|
    t^{-1/2},
    \quad t\in(0,1].
\end{gather*}
We still have to estimate $|\mathsf{I}_5|$. By \eqref{equality}     (with $s=t$), the chain rule       and \eqref{bismut},
\begin{align*}
    \mathsf{I}_5
    &= \Ee\left[\nabla^{3}h_\delta(Y_t^y)
    D_{U_{t,3}}Y_t^y
    \nabla_{v_{2}}Y_t^y
    \nabla_{v_{1}}Y_t^y\right]\\
    &= \Ee\left[D_{U_{t,3}}\left(\nabla^2 h_\delta(Y_t^y)\right)
    \nabla_{v_{2}}Y_t^y
    \nabla_{v_{1}}Y_t^y\right]\\
    &= \Ee\left[D_{U_{t,3}}\left(\nabla^2 h_\delta(Y_t^y)
    \nabla_{v_{2}}Y_t^y
    \nabla_{v_{1}}Y_t^y
    \right)\right]\\
    &\quad\mbox{}-\Ee\left[\nabla^2 h_\delta(Y_t^y)
    \nabla_{v_{1}}Y_t^y D_{U_{t,3}}
    \nabla_{v_{2}}Y_t^y\right]-\Ee\left[\nabla^2 h_\delta(Y_t^y)
    \nabla_{v_{2}}Y_t^yD_{U_{t,3}}
    \nabla_{v_{1}}Y_t^y\right]\\
    &= \frac{1}{t}\,
    \Ee\left[\nabla^2 h_\delta(Y_t^y)
    \nabla_{v_{2}}Y_t^y
    \nabla_{v_{1}}Y_t^y
    \int_0^t\nabla_{v_{3}}Y_s^y\,\dif B_s\right]\\
    &\quad\mbox{}-\Ee\left[\nabla^2 h_\delta(Y_t^y)
    \nabla_{v_{1}}Y_t^y D_{U_{t,3}}
    \nabla_{v_{2}}Y_t^y\right]-\Ee\left[\nabla^2 h_\delta(Y_t^y)
    \nabla_{v_{2}}Y_t^y D_{U_{t,3}}
    \nabla_{v_{1}}Y_t^y\right]\\
    & =: \mathsf{I}_{51}+\mathsf{I}_{52}+\mathsf{I}_{53}.
\end{align*}
For $\mathsf{I}_{51}$, it follows from \eqref{equality} and \eqref{chain} that
\begin{align*}
    \mathsf{I}_{51}
    &= \frac{1}{t}\,
    \Ee\left[D_{U_{t,2}}\left(\nabla h_\delta(Y_t^y)\right)
    \nabla_{v_{1}}Y_t^y \int_0^t\nabla_{v_{3}}Y_t^y\,\dif B_s\right]\\
    &= \frac{1}{t}\,\Ee\left[D_{U_{t,2}}\left(\nabla h_\delta(Y_t^y) \nabla_{v_{1}}Y_t^y
    \int_0^t\nabla_{v_{3}}Y_s^y\,\dif B_s\right)\right]\\
    &\qquad\mbox{}-\frac{1}{t}\, \Ee\left[\nabla h_\delta(Y_t^y)
    D_{U_{t,2}}\nabla_{v_{1}}Y_t^y\int_0^t\nabla_{v_{3}}Y_s^y\,\dif B_s\right]\\
    &\qquad\mbox{}-\frac{1}{t}\,\Ee\left[\nabla h_\delta(Y_t^y)
    \nabla_{v_{1}}Y_t^y \int_0^tD_{U_{t,2}}\nabla_{v_{3}}Y_s^y\,\dif B_s\right]\\
    &\qquad\mbox{}-\frac{1}{t^2}\,\Ee\left[\nabla h_\delta(Y_t^y)
    \nabla_{v_{1}}Y_t^y
    \int_0^t \nabla_{v_{3}}Y_s^y \nabla_{v_{2}}Y_s^y\,\dif s\right]\\
    &=: \mathsf{I}_{511}+\mathsf{I}_{512}+\mathsf{I}_{513}+\mathsf{I}_{514}.
\end{align*}
By  \eqref{bismut}, \eqref{gradesti-1} and It\^{o}'s isometry, we get for $t\in(0,1]$,
\begin{align*}
    \left|\mathsf{I}_{511}\right|
    &=\left|\frac{1}{t^2}\, \Ee\left[\nabla h_\delta(Y_t^y)
    \nabla_{v_{1}}Y_t^y
    \int_0^t\nabla_{v_{3}}Y_s^y \,\dif B_s
    \int_0^t\nabla_{v_{2}}Y_s^y \,\dif B_s \right]\right| \\
    &\leq \frac{C}{t^2}\, \|\nabla h_\delta\|_{\infty}  |v_1|
    \left(\Ee\left| \int_0^t\nabla_{v_{3}}Y_s^y \,\dif B_s\right|^2\right)^{1/2}
    \left(\Ee\left| \int_0^t\nabla_{v_{2}}Y_s^y \,\dif B_s\right|^2\right)^{1/2} \\
    &\leq C|v_1||v_2||v_3|t^{-1}.
\end{align*}
By \eqref{dgradesti-1}, It\^{o}'s isometry and \eqref{gradesti-1}, we get for $t\in(0,1]$,
\begin{align*}
    \left|\mathsf{I}_{512}\right|
    &\leq \frac{1}{t}\,\|\nabla h_\delta\|_{\infty} |v_1| |v_2|
    \left(\Ee\left| \int_0^t\nabla_{v_{3}}Y_s^y \,\dif B_s\right|^2\right)^{1/2} \\
    &\leq C|v_1||v_2||v_3|t^{-1/2}
\end{align*}
and, with a similar calculation,
\begin{align*}
    \left|\mathsf{I}_{513}\right|
    \leq C|v_1||v_2||v_3|t^{-1/2}.
\end{align*}
Using \eqref{gradesti-1} we see for $t\in(0,1]$ that
\begin{gather*}
    \left|\mathsf{I}_{514}\right|
    \leq \frac{C}{t^2}\,\|\nabla h_\delta\|_\infty|v_1| |v_2| |v_3|t
    \leq C|v_1||v_2||v_3|t^{-1}.
\end{gather*}
If we combine the estimates for $|\mathsf{I}_{511}|,\dots,|\mathsf{I}_{514}|$, we obtain for $t\in(0,1]$,
\begin{align*}
    \left|\mathsf{I}_{51}\right|
    \leq C|v_1||v_2||v_3|\left(t^{-1}+t^{-1/2}\right)
    \leq C|v_1||v_2||v_3|t^{-1}.
\end{align*}
    Using again       \eqref{equality} and \eqref{bismut},     we see       that
 \begin{align*}
    \mathsf{I}_{52}
    &= -\Ee\left[\nabla^{2}h_\delta(Y_t^y) D_{U_{t,1}}Y_t^yD_{U_{t,3}} \nabla_{v_{2}}Y_t^y\right]\\
    &= -\Ee\left[D_{U_{t,1}}\left(\nabla h_\delta(Y_t^y)\right) D_{U_{t,3}}\nabla_{v_{2}}Y_t^y\right]\\
    &= -\Ee\left[D_{U_{t,1}}\left(\nabla h_\delta(Y_t^y)D_{U_{t,3}} \nabla_{v_{2}}Y_t^y \right)\right]
        +\Ee\left[\nabla h_\delta(Y_t^y)D_{U_{t,1}} D_{U_{t,3}} \nabla_{v_{2}}Y_t^y \right]\\
    &= -\frac{1}{t}\,\Ee\left[ \nabla h_\delta(Y_t^y)D_{U_{t,3}} \nabla_{v_{2}}Y_t^y
        \int_0^t\nabla_{v_{1}}Y_s^y\, \dif B_s \right]
        + \Ee\left[\nabla h_\delta(Y_t^y)D_{U_{t,1}} D_{U_{t,3}} \nabla_{v_{2}}Y_t^y \right].
\end{align*}   
Now       we can use \eqref{dgradesti-1}, It\^{o}'s isometry, \eqref{gradesti-1} and \eqref{dgradesti-3} to get
for $t\in(0,1]$,
\begin{align*}
    |\mathsf{I}_{52}|
    &\leq \frac{C}{t}\, \|\nabla h_\delta\|_{\infty}
    |v_2||v_3|
    \left(\Ee\left| \int_0^t\nabla_{v_{1}}Y_s^y \,\dif B_s\right|^2\right)^{1/2}\\
    &\qquad\mbox{} +\|\nabla h_\delta\|_{\infty}
    \Ee\left[\left| D_{U_{t,1}}D_{U_{t,3}} \nabla_{v_{2}}Y_t^y\right| \right]\\
    &\leq \frac{C}{t}\,|v_1||v_2||v_3|\sqrt{t}+C|v_1| |v_2| |v_3|\\
    &\leq C|v_1| |v_2| |v_3|t^{-1/2}.
\end{align*}
The same argument gives
\begin{align*}
    |\mathsf{I}_{53}| \leq C|v_1| |v_2| |v_3|t^{-1/2},
    \quad t \in (0,1].
\end{align*}
If we combine the estimates for $|\mathsf{I}_{51}|, |\mathsf{I}_{52}|, |\mathsf{I}_{53}|$, we obtain
for all $t\in (0,1]$
\begin{gather*}
    |\mathsf{I}_{5}|\leq C|v_1| |v_2| |v_3|\left(t^{-1}+t^{-1/2}\right)
    \leq C|v_1| |v_2| |v_3| t^{-1}.
\end{gather*}
Finally we can combine the bounds for $|\mathsf{I}_i|$, $i=1,2,3,4,5$, and we get for $t\in(0,1]$,
\begin{align*}
    \left|\nabla_{v_{3}}\nabla_{v_{2}}\nabla_{v_{1}} \Ee\left[h_\delta(Y_{t}^{y})\right]\right|
    &\leq |\mathsf{I}_1| + \dots + |\mathsf{I}_5|\\
    &\leq C|v_1||v_2||v_3|\left(1+t^{-1/2}+t^{-1}\right)\\
    &\leq C|v_1||v_2||v_3| t^{-1}.
\end{align*}
We can now let $\delta\downarrow0$ and use dominated convergence to see for all $v_1,v_2,v_3,y\in\rd$ and $t\in(0,1]$,
\begin{gather*}
    \left|\nabla_{v_{3}}\nabla_{v_{2}}\nabla_{v_{1}} Q_th(y)\right|
    =
    \lim_{\delta\downarrow 0} \left|\nabla_{v_{3}}\nabla_{v_{2}}\nabla_{v_{1}} \Ee\left[h_\delta(Y_{t}^{y})\right]\right|
    \leq
    C|v_1||v_2||v_3| t^{-1},
\end{gather*}
which yields \eqref{grad-3}. This completes the proof.

\section{A lower bound for the Ornstein--Uhlenbeck case} \label{s:OU}

In this section we establish a lower bound for the Ornstein--Uhlenbeck case, i.e.\ for $b(x)=-x$ and $\sigma=I_{d\times d}$. In this case, \textup{\textbf{(H1)}} holds with $\theta_0=1$ and $K=0$, and \textup{\textbf{(H2)}} holds with $\theta_1=1$ and $\theta_2=\theta_3=0$.
In this section, $\mu_\alpha$ and $\mu_2$ are the ergodic measures of the solutions to the SDEs
\begin{gather*}\marginnote{    One line}
        \dup X_t=-X_t\,\dup t + \dup L_t
        \quad\text{resp.}\quad
        \dup Y_t=-Y_t\,\dup t + \dup B_t.
\end{gather*}

\begin{proposition}
There exists a constant $C_d>0$ depending only on $d$ such that for any $\alpha\in(1,2)$,
\begin{gather*}
    W_1(\mu_\alpha,\mu_2)\geq C_{d}(2-\alpha).
\end{gather*}
\end{proposition}

\begin{proof}
Since $X_{t}=\eup^{-t}x+\eup^{-t}\int_{0}^{t}\eup^{s}\,\dif L_{s}$, we get for $\xi\in\rd$,
\begin{align*}
    \Ee\left[\eup^{\iup\xi X_{t}}\right]
    &= \eup^{\iup\xi\eup^{-t}x} \Ee\left[ \eup^{\iup\int_{0}^{t}\xi \eup^{-t}\eup^{s}\,\dif L_{s}}\right]
    = \eup^{\iup\xi\eup^{-t}x}\eup^{-2^{-1}\int_{0}^{t}|\xi \eup^{-t}\eup^{s}|^{\alpha}\,\dif s}\\
    &= \eup^{\iup\xi\eup^{-t}x} \eup^{-(2\alpha)^{-1}|\xi|^{\alpha}(1-\eup^{-\alpha t})}
    \;\xrightarrow[]{\; t\to\infty\; }\;
    \eup^{-(2\alpha)^{-1}|\xi|^{\alpha}}
    =\Ee\left[\eup^{\iup\xi\alpha^{-1/\alpha}L_{1}}\right].
\end{align*}
Thus, the ergodic measure $\mu_\alpha$ is given by the law of $\alpha^{-1/\alpha}L_{1}$. Similarly, the ergodic measure $\mu_2$ is given by the law of $2^{-1/2}B_{1}$. We know from \cite[Lemma 4.2]{DS19} that
\begin{gather*}
    \Ee\left|L_{1}\right|
    =\frac{2\Gamma\left(\frac{d+1}{2}\right)}{\sqrt{\pi}\Gamma\left(\frac{d}{2}\right)}\,
    2^{-1/\alpha}
    \Gamma\left(1-\tfrac 1\alpha\right)
    \quad\text{and}\quad
    \Ee\left|B_{1}\right|
    = \frac{2\Gamma\left(\frac{d+1}{2}\right)}{\sqrt{\pi}\Gamma\left(\frac{d}{2}\right)}\, 2^{-1/2}\Gamma\left(\tfrac12\right).
\end{gather*}
Note that
\begin{gather*}
W_1(\mu_\alpha,\mu_2)
    = \inf_{\Pi\in\mathscr{C}(\mu_\alpha,\mu_2)}\iint |x-y|\,\Pi(\dif x,\dif y),
\end{gather*}
where $\mathscr{C}(\mu_\alpha,\mu_2)$ denotes the set of all couplings of $\mu_\alpha$
and $\mu_2$. Then we obtain
\begin{align*}
    W_1(\mu_\alpha,\mu_2)
    &\geq \inf_{\Pi\in\mathscr{C}(\mu_\alpha,\mu_2)} \left|\iint |x|\,\Pi(\dif x,\dif y) - \iint |y|\,\Pi(\dif x,\dif y)\right|\\
    &= \left|\int |x|\,\mu_\alpha(\dif x) - \int |y|\,\mu_2(\dif y) \right|\\
    &=\left| \Ee\left|\alpha^{-1/\alpha}L_{1}\right| - \Ee\left|2^{-1/2}B_{1}\right| \right|\\
    &=\frac{2\Gamma\left(\frac{d+1}{2}\right)}{\sqrt{\pi}\Gamma\left(\frac{d}{2}\right)}\,\left|(2\alpha)^{-1/\alpha}
    \Gamma\left(1-\tfrac1\alpha\right) - 2^{-1}\Gamma\left(\tfrac12\right)\right|.
\end{align*}
Combining this with Lemma \ref{gammalower} below, we complete the proof.
\end{proof}

\begin{lemma}\label{gammalower}
For any $\alpha\in (1,2)$,
\begin{gather*}
    \left|(2\alpha)^{-1/\alpha}\Gamma\left(1-\tfrac 1\alpha\right) - 2^{-1}\Gamma\left(\tfrac12\right)\right|
   \geq C(2-\alpha).
    \end{gather*}
\end{lemma}

\begin{proof}
Let
\begin{gather*}
    \phi(x)
    := (2x)^{-1/x}\Gamma\left(1-\frac 1x\right),
    \quad 1<x\leq2.
\end{gather*}
It is not hard to verify that for all $x\in(1,2)$, $\phi'(x)<\phi'(2-)<0$, and thus
\begin{align*}
    |\phi(\alpha)-\phi(2)|
    &= (2-\alpha)\left|\int_0^1\phi'\left(\alpha+r(2-\alpha)\right) \dif r \right|\\
    &=(2-\alpha) \int_0^1\left[-\phi'\left(\alpha+r(2-\alpha)\right) \right] \dif r\\
    &\geq -\phi'(2-)(2-\alpha).
    \qedhere
\end{align*}
\end{proof}

\begin{ack}
	C.-S.\ Deng is supported by     the       National Natural Science Foundation of China (12371149) and     the       Natural Science Foundation of Hubei Province of China (2022CFB129). 
	R.\ Schilling is supported through the DFG-NCN Beethoven Classic 3 project SCHI419/11-1 \& NCN 2018/31/G/ST1/02252,     the  6G-life project (BMBF programme ``Souver\"an. Digital. Vernetzt.'' 16KISK001K) and the SCADS.AI centre.      
	L.\ Xu is supported by National Natural Science Foundation of China No. 12071499, The Science and Technology Development Fund (FDCT) of Macau S.A.R. FDCT 0074/2023/RIA2, and University of Macau grants MYRG2020-00039-FST, MYRG-GRG2023-00088-FST.
\end{ack}


\begin{thebibliography}{99}

\bibitem{abra-steg}
M.\ Abramowitz and I.A.\ Stegun (1972):
\textit{Handbook of Mathematical Formulas}. Dover, New York (reprint of the 10th edition).




\bibitem{CDSX22}
P.\ Chen, C.-S.\ Deng, R.L.\ Schilling and L.\ Xu (2023):
Approximation of the invariant measure of stable SDEs by an Euler--Maruyama scheme.
\textit{Stochastic Processes and Their Applications} \textbf{163}, 136--167.

\bibitem{CSX22+}
P.\ Chen, Q.-M.\ Shao and  L.\ Xu (2023):
A probability measure approximation framework: Markov processes approach.
\textit{Annals of Applied Probability} \textbf{33}, 1619--1659.

\bibitem{CHZ20}
Z.-Q.\ Chen, Z.\ Hao and X.\ Zhang (2020):
H\"{o}lder regularity and gradient estimates for SDEs driven by cylindrical $\alpha$-stable processes.
\textit{Electronic Journal of Probability} \textbf{25}, no.\ 137.



\bibitem{CZZ21}
Z.-Q.\ Chen, X.\ Zhang and G.\ Zhao (2021):
Supercritical SDEs driven by multiplicative stable-like L\'{e}vy processes.
\textit{Transactions of the American Mathematical Society} \textbf{374}, 7621--7655.



\bibitem{DS19}
C.-S.\ Deng and  R.L.\ Schilling (2019):
Exact Asymptotic formulas for the heat kernels of space and time-fractional equations.
\textit{Fractional Calculus $\&$ Applied Analysis} \textbf{22}, 968--989.


\bibitem{DSX23}
C.-S.\ Deng, R.L.\ Schilling and L.\ Xu (2022):
Singular integrals of subordinators with applications to structural properties of SPDEs.
\textit{Transactions of the American Mathematical Society} \textbf{375}, 6043--6073.


\bibitem{DXZ14}
Z.\ Dong, L.\ Xu and X.\ Zhang (2014):
Exponential ergodicity of stochastic Burgers equations driven by $\alpha$-stable processes.
\textit{Journal of Statistical Physics} \textbf{154}, 929--949.


\bibitem{Dur19}
R.\ Durrett (2019):
\textit{Probability: Theory and Examples} (5th ed).
Cambridge University Press, Cambridge.

\bibitem{E16} A.\ Eberle (2016): Reflection couplings and contraction rates for
diffusions. \textit{Probability Theory and Related Fields} \textbf{166}, 851--886.


\bibitem{Gro21}
K.\ Grobys (2021):
What do we know about the second moment of financial markets?
\textit{International Review of Financial Analysis} \textbf{78}, 101891.

\bibitem{Hai21}
M.\ Hairer (2021):
The convergence of Markov processes.
\url{https://www.hairer.org/notes/Convergence.pdf} (accessed January 13, 2023).

\bibitem{JMW96}
A.\ Janicki, Z.\ Michna and A.\ Weron (1996):
Approximation of stochastic differential equations driven by $\alpha$-stable L\'evy motion.
\textit{Applicationes Mathematicae} \textbf{24}, 149--168.

\bibitem{KS19}
F.\ K\"{u}hn and R.L.\ Schilling (2019):
Strong convergence of the Euler-Maruyama approximation for a class of L\'evy-driven SDEs.
\textit{Stochastic Processes and Their Applications}  \textbf{129}, 2654--2680.


\bibitem{LW20}
M.\ Liang and J.\ Wang (2020):
Gradient estimates and ergodicity for SDEs driven by multiplicative L\'evy noises via coupling.
\textit{Stochastic Processes and Their Applications}  \textbf{130}, 3053--3094.

\bibitem{LSX20}
W.\ Liu, R.\ Song and L.\ Xie (2020):
Gradient estimates for the fundamental solution of L\'{e}vy type operator.
\textit{Advances in Nonlinear Analysis} \textbf{9}, 1453--1462.

\bibitem{Liu22-2}
X.\ Liu (2022):
On the $\alpha$-dependence of stochastic differential equations with H\"{o}lder drifts and driven by $\alpha$-stable L\'{e}vy processes.
\textit{Journal of Mathematical Analysis and Applications} \textbf{506}, 125642.


\bibitem{Liu22}
X.\ Liu (2022):
Limits of invariant measures of stochastic Burgers equations driven by two kinds of $\alpha$-stable processes.
\textit{Stochastic Processes and Their Applications} \textbf{146}, 1--21.

\bibitem{Liu22-1}
X.\ Liu (2022):
The $\alpha$-dependence of the invariant measure of stochastic real Ginzburg-Landau equation driven by $\alpha$-stable L\'evy processes.
\textit{Journal of Differential Equations} \textbf{314}, 418--445.


\bibitem{M-T3}
S.P.\ Meyn and R.L.\ Tweedie (1993):
Stability of Markovian processes III: Foster-Lyapunov criteria for continuous-time processes.
\textit{Advances in Applied Probability}  \textbf{25}, 518--548.


\bibitem{MeTw09} S.P.\ Meyn and R.L.\ Tweedie (2009):
\textit{Markov Chains and Stochastic Stability} (2nd ed).
Cambridge University Press, Cambridge.





\bibitem{PSXY22}
M. Peligrad, H. Sang, Y. Xiao and G. Yang (2022):
Limit theorems for linear random fields with innovations in the domain of attraction of a stable law.
\textit{Stochastic Processes and Their Applications} \textbf{150}, 596-621.


\bibitem{PSXZ12}
E.\ Priola, A.\ Shirikyan, L.\ Xu and J.\ Zabczyk (2012):
Exponential ergodicity and regularity for equations with L\'{e}vy noise.
\textit{Stochastic Processes and their Applications} \textbf{122}, 106--133.

\bibitem{PZ11}
E.\ Priola and J.\ Zabczyk (2011):
Structural properties of semilinear SPDEs driven by cylindrical stable processes.
\textit{Probability Theory and Related Fields} \textbf{149}, 97--137.



\bibitem{Wan16}
J.\ Wang (2016):
$L^{p}$-Wasserstein distance for stochastic differential equations driven by L\'evy processes.
\textit{Bernoulli}  \textbf{22}, 1598--1616.

\bibitem{WXX17}
R.\ Wang, J.\ Xiong and L.\ Xu (2017):
Irreducibility of stochastic real Ginzburg-Landau equation driven by $\alpha$-stable noises and applications.
\textit{Bernoulli} \textbf{23}, 1179--1201.



\bibitem{Xu13}
L.\ Xu (2013):
Ergodicity of the stochastic real Ginzburg-Landau equation driven by $\alpha$-stable noises.
\textit{Stochastic Processes and their Applications} \textbf{123}, 3710--3736.


\bibitem{Zha13}
X.\ Zhang (2013):
Derivative formulas and gradient estimates for SDEs driven by $\alpha$-stable processes.
\textit{Stochastic Processes and their Applications} \textbf{123}, 1213--1228.




\end{thebibliography}
\end{document}